\newcommand{\upperRomannumeral}[1]{\uppercase\expandafter{\romannumeral#1}}
\theoremstyle{plain}
  \newtheorem{proposition}[]{Proposition}
  \newtheorem{lemma}[]{Lemma}
  \newtheorem{theorem}[]{Theorem}
  \newtheorem{corollary}[]{Corollary}
  \newtheorem{conjecture}[]{Conjecture}
  \newtheorem{remark}[]{Remark}
\title[Curie-Weiss perturbation]{Ising model with Curie-Weiss perturbation}
\author{Federico Camia}
\address{Division of Science, NYU Abu Dhabi, Saadiyat Island, Abu Dhabi, UAE \& Courant Institute of Mathematical Sciences, New York University, 251 Mercer st, New York, NY 10012, USA \& Department of Mathematics, Faculty of Science, Vrije Universiteit Amsterdam, De Boelelaan 1111, 1081 HV Amsterdam, The Netherlands.}
\email{federico.camia@nyu.edu}
\author{Jianping Jiang}
\address{Yanqi Lake Beijing Institute of Mathematical Sciences and Applications, Building 11, Yanqi Island, Yanqi Lake West Road, Beijing 101408, China \& Yau Mathematical Sciences Center, Tsinghua University, Beijing 100084, China.}
\email{jianpingjiang11@gmail.com}
\author{Charles M. Newman}
\address{Courant Institute of Mathematical Sciences, New York University, 251 Mercer st, New York, NY 10012, USA \& NYU-ECNU Institute of Mathematical ciences at NYU Shanghai, 3663 Zhongshan Road North, Shanghai 200062, China.}
\email{newman@cims.nyu.edu}
\begin{document}
\begin{abstract}
Consider the nearest-neighbor Ising model on $\Lambda_n:=[-n,n]^d\cap\mathbb{Z}^d$ at inverse temperature $\beta\geq 0$ with free boundary conditions, and let $Y_n(\sigma):=\sum_{u\in\Lambda_n}\sigma_u$ be its total magnetization. Let $X_n$ be the total magnetization perturbed by a critical Curie-Weiss interaction, i.e.,
\begin{equation*}
\frac{d F_{X_n}}{d F_{Y_n}}(x):=\frac{\exp[x^2/\left(2\langle Y_n^2 \rangle_{\Lambda_n,\beta}\right)]}{\left\langle\exp[Y_n^2/\left(2\langle Y_n^2\rangle_{\Lambda_n,\beta}\right)]\right\rangle_{\Lambda_n,\beta}},
\end{equation*}
where $F_{X_n}$ and $F_{Y_n}$ are the distribution functions for $X_n$ and $Y_n$ respectively. We prove that for any $d\geq 4$ and $\beta\in[0,\beta_c(d)]$ where $\beta_c(d)$ is the critical inverse temperature, any subsequential limit (in distribution) of $\{X_n/\sqrt{\mathbb{E}\left(X_n^2\right)}:n\in\mathbb{N}\}$ has an analytic density (say, $f_X$) all of whose zeros are pure imaginary, and $f_X$ has an explicit expression in terms of the asymptotic behavior of zeros for the moment generating function of $Y_n$. We also prove that for any $d\geq 1$ and then for $\beta$ small,
\begin{equation*}
	f_X(x)=K\exp(-C^4x^4),
\end{equation*}
where $C=\sqrt{\Gamma(3/4)/\Gamma(1/4)}$ and $K=\sqrt{\Gamma(3/4)}/(4\Gamma(5/4)^{3/2})$. Possible connections between $f_X$ and the high-dimensional critical Ising model with periodic boundary conditions are discussed.
\end{abstract}

\maketitle

\section{Introduction}
\subsection{Overview and motivation}
It has been known since \cite{SG73}  that the renormalized total magnetization in the critical Curie-Weiss model converges in distribution to a random variable with density $C_1\exp(-C_2x^4)$. See \cite{EN78,ENR80} for various extensions of this classical result as the underlying single spin distribution at $\beta=0$ is varied. Part of the current paper (when $d\geq 1$ and $\beta$ is small) can be viewed as studying extensions of this classical result by including some nearest-neighbor interactions.

The effect of periodic and free boundary conditions for the high-dimensional Ising model in finite domains has been a  long-standing debate in the physics literature; see, e.g., \cite{WY14,GEZGD17,ZGDG20} for some recent results and references therein for many more studies. For the critical Ising model on $\Lambda_n:=[-n,n]^d\cap\mathbb{Z}^d$ where $d>4$ with periodic boundary conditions, the two-point function is conjectured in \cite{Pap06} to have a ``plateau":  it behaves like the infinite-volume counterpart for small distance before leveling off at an order of $n^{-d/2}$ for large distance. This plateau lower bound was proved in Theorem~1 of \cite{Pap06} and the upper bound remains open; the conjecture was numerically verified in \cite{GEZGD17}. Similar plateaus were proved for simple random walk for $d>2$ in \cite{ZGFDG18,Sla20,ZGDG20}, for weakly self-avoiding walk for $d>4$ in \cite{Sla20}, and for nearest-neighbor percolation for $d\geq 11$ in \cite{HMS21}. On the contrary, it was proved in \cite{CJN21} that such a plateau does not exist in the high-dimensional critical Ising model with free boundary conditions. 

Loosely speaking, the main conclusion in \cite{Pap06} was that a macroscopic (or bulk) quantity in the high-dimensional Ising model with periodic boundary conditions ``parallels more closely the complete graph paradigm". For example, it was proved in \cite{Pap06} that the critical susceptibility on $\Lambda_n$ in dimensions $d>4$ with periodic boundary conditions has a lower bound $n^{d/2}$ (which is the order of the critical susceptibility in the Curie-Weiss model); while it behaves like $n^2$ for free boundary conditions (see \cite{CJN21} for a proof).  

A main motivation of the current paper is to study the scaling limit of the critical Ising magnetization field in dimension $d>4$ with periodic boundary conditions. The results in \cite{Pap06} suggest that this scaling limit is non-Gaussian, and on page 37 of \cite{Pap06}, the author mentions that the macroscopic behavior in the high dimensional Ising model with periodic boundary conditions is expected to behave similarly to the Curie-Weiss model. So in this paper, we study the Ising model perturbed by a critical Curie-Weiss interaction. We prove that for any $d\geq 1$ and $\beta$ small, the limit of the total magnetization in this perturbed model is the same as in the standard critical Curie-Weiss model, and we conjecture that this should be true for any $\beta\in[0,\beta_c(d))$ where $\beta_c(d)$ is the critical inverse temperature of the classical Ising model on $\mathbb{Z}^d$. For $d\geq 4$ and $\beta=\beta_c(d)$, we prove the partial result that any subsequential limit of this perturbed total magnetization is non-Gaussian and has an analytic density all of whose zeros are pure imaginary.  We think that there may be a close connection between periodic boundary conditions and Curie-Weiss perturbation scaling limits in the high-dimensional Ising model (see Conjecture \ref{conj1} and \eqref{eq:cwpp} of Section~\ref{subsec:HighDIsing} below) and thus the analysis of the Curie-Weiss case may help understand the periodic boundary conditions case.

\subsection{Main results}
Consider the Ising model on $\Lambda_n$ at inverse temperature $\beta\geq 0$ with free boundary conditions; let $\langle\cdot\rangle_{\Lambda_n,\beta}$ denote its expectation. See \eqref{eq:expectationdef}-\eqref{eq:partf} (with $h=0$)  in the Appendix for explicit definitions. Let 
\begin{equation}
	Y_n(\sigma):=\sum_{u\in\Lambda_n}\sigma_u
\end{equation}
be the total magnetization. For any $d\geq 1$ and $\beta\in[0,\beta_c(d))$, it is not hard to prove (see Lemma \ref{lem:X_nY_n} below) that
\begin{equation}\label{eq:Y_ntonormal}
	\frac{Y_n}{\sqrt{\langle Y_n^2\rangle_{\Lambda_n,\beta}}}\Longrightarrow \text{ a standard normal distribution as }n\rightarrow\infty, 
\end{equation}
where $\Longrightarrow$ denotes convergence in distribution. By the main results in \cite{Aiz82,Fro82,ADC21}, the last weak convergence also holds for any $d\geq 4$ and $\beta=\beta_c(d)$.  For $d=2$ and $\beta=\beta_c(2)$, it was proved in \cite{CGN16} that $Y_n/\sqrt{\langle Y_n^2\rangle_{\Lambda_n,\beta}}$ converges in distribution to $Y$ with
\begin{equation}
	\lim_{x\rightarrow\infty}\frac{\ln\mathbb{P}(Y>x)}{x^{16}}=-c,
\end{equation}
where $c\in(0,\infty)$. For $d=3$ and $\beta=\beta_c(3)$, the limit behavior of $Y_n/\sqrt{\langle Y_n^2\rangle_{\Lambda_n,\beta}}$ is unknown. For any $d\geq 2$ and $\beta>\beta_c(d)$, it is believed that
\begin{equation}\label{eq:Y_nsuper}
	\frac{Y_n}{\sqrt{\langle Y_n^2\rangle_{\Lambda_n,\beta}}}\Longrightarrow \frac{1}{2}\delta_{-1}+\frac{1}{2}\delta_{+1},
\end{equation}
where $\delta_{\pm 1}$ is the unit Dirac point measure at $\pm 1$. A well-known result due to Lee and Yang \cite{LY52} is that all zeros of the moment generating function of $Y_n$, $\langle\exp(zY_n)\rangle_{\Lambda_n,\beta}$ or equivalently of the partition function (see \eqref{eq:partf}), are pure imaginary; so we may assume that all the zeros are $\{\pm i\alpha_{j,n}:j\geq 1\}$ (listed with multiplicities, if any) such that
\begin{equation}
	0<\alpha_{1,n}\leq\alpha_{2,n}\leq\dots.
\end{equation}

We consider a random variable $X_n$ whose distribution is obtained from that of $Y_n$ by adding a Curie-Weiss interaction:
\begin{equation}
	\frac{d F_{X_n}}{d F_{Y_n}}(x)=\frac{\exp[x^2/(2\langle Y_n^2\rangle_{\Lambda_n,\beta})]}{\left\langle \exp[Y_n^2/(2\langle Y_n^2\rangle_{\Lambda_n,\beta})]\right\rangle_{\Lambda_n,\beta}},
\end{equation}
where $F_{X_n}$ and $F_{Y_n}$ are the distribution functions for $X_n$ and $Y_n$ respectively. Equivalently, $X_n$ is the total magnetization for the perturbed Ising model with Hamiltonian
\begin{equation}\label{eq:Hpert}
	H_{\Lambda_n,\beta}(\sigma):=-\beta\sum_{\{u,v\}}\sigma_u\sigma_v-\frac{Y^2_n(\sigma)}{2\langle Y_n^2\rangle_{\Lambda_n,\beta}},\qquad \sigma\in\{-1,+1\}^{\Lambda_n}.
\end{equation}
When $\beta=0$, the form of \eqref{eq:Hpert} means that this perturbed model is exactly the critical Curie-Weiss model. Let $\mathbb{E}_{\Lambda_n,\beta}$ denote the expectation for this perturbed Ising model. The \textbf{cumulants}, $u_n(X)$, of a random variable $X$ are defined (if they exist) by
\begin{equation}
	u_n(X):=\left.\frac{d^n\ln\mathbb{E} \exp(zX)}{d z^n}\right\vert_{z=0}.
\end{equation}

Here is our main result.
\begin{theorem}\label{thm1}
	For $d\geq 1$ and $\beta\in[0,\beta_c(d))$, and also for $d\geq 4$ and $\beta=\beta_c(d)$, any subsequential limit in distribution of $\{X_n/\sqrt{\mathbb{E}_{\Lambda_n,\beta}(X_n^2)}:n\in\mathbb{N}\}$ has an analytic density $f_X$ all of whose zeros in $\mathbb{C}$ are pure imaginary. More precisely, suppose that there exists a subsequence $\{n_k:k\in\mathbb{N}\}$ and a random variable $X$ such that
    \begin{equation}
       X_{n_k}/\sqrt{\mathbb{E}_{\Lambda_{n_k},\beta}(X_{n_k}^2)}\Longrightarrow X \text{ as } k\rightarrow\infty. 
     \end{equation}
    Then $X$ has a density function given by
	\begin{equation}\label{eq:f_X}
		f_X(x)=C_3f_W(C_3x),
	\end{equation}
	where $C_3:=\sqrt{\mathbb{E}(W^2)}\in(0,\infty)$ and
	\begin{equation}\label{eq:f_W}
		f_W(x)=K\exp[-\kappa_1x^4]\prod_{j\geq 1}\left[\left(1+\frac{x^2}{a_j^2}\right)\exp\left(-\frac{x^2}{a_j^2}\right)\right],~x\in\mathbb{R},
	\end{equation}
	where  
	\begin{equation}
		a_j:=\lim_{k\rightarrow\infty}\alpha_{j,n_k}\left[-u_4(Y_{n_k})/4!\right]^{1/4} \text{ for each }j\geq 1,
	\end{equation}
	\begin{equation}\label{eq:kappa_1def}
		\kappa_1:=1-\frac{\sum_{j\geq 1}a_j^{-4}}{2},
	\end{equation}
	and $K\in(0,\infty)$ is such that $f_W$ is a probability density function; $\{a_j:j\geq 1\}$ may be empty, finite or infinite.
\end{theorem}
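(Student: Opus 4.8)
\emph{Proof proposal.} The plan is to pass to moment generating functions and exploit the Lee--Yang zeros. Write $s_n^2:=\langle Y_n^2\rangle_{\Lambda_n,\beta}$, $\phi_n(z):=\langle e^{zY_n}\rangle_{\Lambda_n,\beta}$, and $\epsilon_n:=[-u_4(Y_n)/4!]^{1/4}$; here $u_4(Y_n)<0$ by the Lebowitz inequality (with strict inequality at finite volume, $Y_n$ being non-Gaussian), so $\epsilon_n>0$. By Lee--Yang and Hadamard factorization, $\phi_n(z)=\prod_{j\geq1}(1+z^2/\alpha_{j,n}^2)$ with $\alpha_{1,n}\leq\alpha_{2,n}\leq\cdots$; matching Taylor coefficients with $\ln\phi_n(z)=\tfrac12 s_n^2 z^2+\tfrac{u_4(Y_n)}{4!}z^4+\cdots$ (odd cumulants vanish by symmetry) gives $\sum_j\alpha_{j,n}^{-2}=s_n^2/2$ and $\sum_j\alpha_{j,n}^{-4}=2\epsilon_n^4$. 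Put $\rho_n:=s_n/\epsilon_n$ and $b_{j,n}:=\alpha_{j,n}\epsilon_n$, so that
$$\sum_{j\geq1}b_{j,n}^{-2}=\tfrac12\rho_n^2,\qquad \sum_{j\geq1}b_{j,n}^{-4}=2,\qquad b_{j,n}\geq b_{1,n}\geq 2^{-1/4}\ \ (j\geq1).$$
Since $\rho_n^4=4!\,s_n^4/(-u_4(Y_n))$ and the normalized fourth cumulant $-u_4(Y_n)/s_n^4$ tends to $0$ in every regime covered by the theorem (immediate from \eqref{eq:Y_ntonormal} in the subcritical case, and the triviality statement of \cite{Aiz82,Fro82,ADC21} for $d\geq4$, $\beta=\beta_c(d)$), we have $\rho_n\to\infty$. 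Finally, since the law of $X_n$ is $e^{x^2/(2s_n^2)}dF_{Y_n}(x)$ normalized, a Hubbard--Stratonovich identity ($e^{a^2/2}=\int e^{at-t^2/2}dt/\sqrt{2\pi}$), a change of variables, and the two displayed identities give, for $\tilde X_n:=\epsilon_n X_n/s_n^2=X_n/(s_n\rho_n)$,
\begin{equation*}
\mathbb{E}_{\Lambda_n,\beta}\!\left(e^{z\tilde X_n}\right)=e^{-z^2/(2\rho_n^2)}\,\frac{\int_{\mathbb{R}}e^{L_n(v)+vz}\,dv}{\int_{\mathbb{R}}e^{L_n(v)}\,dv},\qquad L_n(v):=\sum_{j\geq 1}\Bigl[\ln\!\bigl(1+\tfrac{v^2}{b_{j,n}^{2}}\bigr)-\tfrac{v^2}{b_{j,n}^{2}}\Bigr].
\end{equation*}

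Next I would establish uniform control on $L_n$. Using $-\tfrac12 t^2\leq\ln(1+t)-t\leq-\tfrac{t^2}{2(1+t)}$ for $t\geq0$ together with $b_{j,n}\geq2^{-1/4}$ and $\sum_j b_{j,n}^{-4}=2$ (splitting the sum according to whether $b_{j,n}\lessgtr|v|$), one gets, uniformly in $n$, $-v^4\leq L_n(v)\leq-c_0v^4$ for $|v|\leq1$ and $L_n(v)\leq-c_2v^2$ for $|v|\geq1$. Hence $e^{L_n}$ is uniformly dominated by a Gaussian on $|v|\ge1$, which makes $\{\tilde X_n\}$ tight with moments of every order bounded uniformly in $n$ (comparing $\tilde X_n$ with the measure $e^{L_n}dv$ normalized, the extra Gaussian of variance $\rho_n^{-2}$ being harmless). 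Now fix a subsequence along which $X_{n_k}/\sqrt{\mathbb{E}_{\Lambda_{n_k},\beta}(X_{n_k}^2)}\Longrightarrow X$ and, by a diagonal extraction, pass to a further subsequence (still denoted $n_k$) along which $b_{j,n_k}\to a_j\in[2^{-1/4},\infty]$ for every $j$ (the finite ones being the $a_j$ of the statement). For fixed $v$, the terms with $j\leq J$ converge to $\ln(1+v^2/a_j^2)-v^2/a_j^2$, while the remaining ``escaping'' terms contribute, to leading order, $-\tfrac12 v^4\sum_{j>J}b_{j,n_k}^{-4}$, and $\lim_{J}\lim_{k}\sum_{j>J}b_{j,n_k}^{-4}=2-\sum_j a_j^{-4}=2\kappa_1$ (Fatou gives $\sum_j a_j^{-4}\leq2$, hence $\kappa_1\geq0$, by \eqref{eq:kappa_1def}). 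Thus $L_{n_k}(v)\to L(v)$ pointwise, where
\begin{equation*}
L(v):=-\kappa_1 v^4+\sum_{j\geq 1}\Bigl[\ln\!\bigl(1+\tfrac{v^2}{a_j^{2}}\bigr)-\tfrac{v^2}{a_j^{2}}\Bigr],\qquad e^{L(v)}=e^{-\kappa_1 v^4}\prod_{j\geq 1}\Bigl[\bigl(1+\tfrac{v^2}{a_j^{2}}\bigr)e^{-v^2/a_j^{2}}\Bigr];
\end{equation*}
the uniform bounds upgrade this to locally uniform convergence and, by dominated convergence, $\mathbb{E}_{\Lambda_{n_k},\beta}(e^{z\tilde X_{n_k}})\to\Phi_W(z):=\int_{\mathbb{R}}e^{L(v)+vz}dv\big/\int_{\mathbb{R}}e^{L(v)}dv$ locally uniformly on $\mathbb{C}$ (the prefactor $\to1$ as $\rho_{n_k}\to\infty$).

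To finish, observe from the second display that $e^{L(v)}=f_W(v)/K$ with $K:=(\int_{\mathbb{R}}e^{L})^{-1}$ and $f_W$ as in \eqref{eq:f_W}, so $\Phi_W(z)=\int_{\mathbb{R}}f_W(v)e^{vz}dv$ is the bilateral Laplace transform of the probability density $f_W$; since $\Phi_W$ is entire and $f_W$ decays super-exponentially, the limit law of $\tilde X_{n_k}$ is the variable $W$ with density $f_W$. The uniform moment bounds give $\mathbb{E}_{\Lambda_{n_k},\beta}(\tilde X_{n_k}^2)\to\mathbb{E}(W^2)=C_3^2\in(0,\infty)$, whence $X_{n_k}/\sqrt{\mathbb{E}_{\Lambda_{n_k},\beta}(X_{n_k}^2)}=\tilde X_{n_k}/\sqrt{\mathbb{E}_{\Lambda_{n_k},\beta}(\tilde X_{n_k}^2)}\Longrightarrow W/C_3$, so $X\stackrel{d}{=}W/C_3$ and $X$ has density $f_X(x)=C_3 f_W(C_3x)$, as claimed. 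Finally $f_X$ extends to an entire function — a convergent product (recall $\sum_j a_j^{-4}<\infty$) of the entire factors $1+C_3^2x^2/a_j^2$ times nowhere-vanishing exponentials — whose zeros are exactly the points $\pm\, i\,a_j/C_3$: all pure imaginary.

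\emph{Main obstacle.} The delicate part is the middle step: (i) securing the a priori input $-u_4(Y_n)/\langle Y_n^2\rangle_{\Lambda_n,\beta}^2\to0$, which for $d\geq4$ at $\beta_c$ is precisely the triviality of the fourth cumulant of \cite{Aiz82,Fro82,ADC21}; and (ii) the bookkeeping of the ``escaping'' Lee--Yang zeros — identifying their collective contribution to $L$ as exactly the extra quartic term $-\kappa_1 v^4$ and controlling the $O(t^3)$ errors uniformly enough to pass to the limit. The uniform Gaussian-type domination of $e^{L_n}$, by contrast, is elementary once one has the two zero-sum identities. \emph{Remark.} An alternative route to the pure-imaginary-zeros assertion: the Gaussian (Weierstrass) transform $f\mapsto\int f(\cdot+t)e^{-t^2/2}dt$ maps entire functions with only pure imaginary zeros to functions of the same type (a classical fact of Laguerre--Pólya theory, in the spirit of Lee--Yang); applied to $\phi_n$ this shows the moment generating function of $X_n$, hence of $X_n/\sqrt{\mathbb{E}_{\Lambda_n,\beta}(X_n^2)}$, has only pure imaginary zeros, a property preserved in the limit by Hurwitz's theorem.
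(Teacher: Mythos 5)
Your proposal takes the same Gaussian/Hubbard--Stratonovich route as the paper up to a point: your $e^{L_n}$, after normalization, is exactly the density $f_{d_nW_n}$ of the paper's eq.~\eqref{eq:d_nW_ndensity} (the paper works with this density directly, you with its bilateral Laplace transform; your identity for $\mathbb{E}e^{z\tilde X_n}$ is the Laplace transform of $f_{d_nW_n}$ with the prefactor $e^{-z^2/(2\rho_n^2)}$ accounting for the deconvolution of the auxiliary Gaussian). From there, however, the two arguments diverge. The paper establishes the \emph{form} of the limit density abstractly: Proposition~\ref{prop:ZLY} shows that every $e^{-bx^2}$-twist of the limit law is of Lee--Yang type (via the Lee--Yang theorem applied to the ferromagnetic Hamiltonian $\hat H$), which lets it invoke the classification theorem of \cite{New76} and obtain \eqref{eq:F_Wdensity} — a density with possible $x^{2m}$ and $e^{-\kappa_2 x^2}$ factors and a two-point-mass alternative — and then rules out $m>0$, $\kappa_2\neq 0$ and the point-mass case by the same near-origin Taylor bookkeeping you do. You bypass \cite{New76} and Proposition~\ref{prop:ZLY} entirely and construct the limiting density directly as $\lim_k L_{n_k}$. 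That is a genuine simplification if it can be pushed through, and it buys a self-contained argument; what it costs is that the entire burden shifts onto the escaping-zeros estimate rather than only the parameter-matching.

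Two concrete gaps. First, your tail control uses only $b_{1,n}\ge 2^{-1/4}$; that is not enough to bound the $O(t^3)$ error $\sum_{j>J}b_{j,n}^{-6}$ uniformly as $J\to\infty$. You need the stronger bound $b_{j,n}\ge(j/2)^{1/4}$ (the paper's \eqref{eq:alpha_jlowerbd}, which follows from $j\,b_{j,n}^{-4}\le\sum_{k\le j}b_{k,n}^{-4}\le 2$); with it, $\sum_{j>J}b_{j,n}^{-6}\le b_{J,n}^{-2}\sum_{j>J}b_{j,n}^{-4}\le 2(J/2)^{-1/2}$, which vanishes as $J\to\infty$ uniformly in $n$. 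You acknowledge this as the main obstacle but do not supply the bound. Second, the theorem defines $a_j=\lim_{k\to\infty}\alpha_{j,n_k}[-u_4(Y_{n_k})/4!]^{1/4}$ along the \emph{given} subsequence $\{n_k\}$, whereas your diagonal extraction only produces this limit along a further subsequence. To close this you must show the further-subsequential $a_j$'s agree; the paper does this at the end of its proof by observing that two candidate limits $W$ and $\tilde W$ must be related by a scaling $\rho$, that $\tilde a_j=\rho a_j$ and $\tilde\kappa_1=\kappa_1/\rho^4$, and that the constraint $\kappa_1+\frac12\sum_j a_j^{-4}=1$ (which you derive) then forces $\rho=1$. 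Both gaps are fillable with the tools you already have in hand; with them filled the argument is sound. Your closing remark about the Weierstrass transform preserving the Laguerre--Pólya class is indeed the content of the paper's Proposition~\ref{prop:ZLY}.
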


\begin{remark}
	Theorem \ref{thm1} treats the critical Curie-Weiss perturbation of the Ising model with $\beta\leq\beta_c(d)$. If the Curie-Weiss perturbation is subcritical, then like in the $\beta=0$ case, $X$ is expected to be standard normal. Similarly, in the supercritical case, we expect $X$ to be distributed as a sum of two Dirac point measures as in \eqref{eq:Y_nsuper}.
\end{remark}

\begin{remark}
	By Fatou's lemma and \eqref{eq:cumdef} below, we have
	\begin{equation}
		\sum_{j\geq 1}a_j^{-4}=\sum_{j\geq 1}\liminf_{k\rightarrow\infty}\alpha_{j,n_k}^{-4}\left[-u_4(Y_{n_k})/4!\right]^{-1}\leq \liminf_{k\rightarrow\infty}\sum_{j\geq1}\alpha_{j,n_k}^{-4}\left[-u_4(Y_{n_k})/4!\right]^{-1}=2.
	\end{equation}
So \eqref{eq:kappa_1def} implies that $\kappa_1\in[0,1]$.
\end{remark}

In \cite{YL52}, Yang and Lee argued that when $\beta<\beta_c(d)$, the partition function of the Ising model (as a function of $\exp(-2h)$ where $h$ is the external field defined in the Appendix)  should be nonzero in a neighborhood of $\exp(i0)$, which means that $\alpha_{1,n}$ should be uniformly (in $n$) bounded away from $0$. This has been proved rigorously in \cite{Rue71} (see also Theorem~A of \cite{PR20}) when $\beta$ is small but positive. It is not hard to show that $-u_4(Y_n)$ grows at least linearly in $|\Lambda_n|$. So we have the following corollary, with a complete proof presented in Section \ref{subsec:cor}.
\begin{corollary}\label{cor1}
	For any $d\geq 1$, there exists $\beta_1(d)\in(0,\beta_c(d)]$ such that for any $\beta\in[0,\beta_1(d))$, we have that as $n\rightarrow\infty$,
	\begin{equation}
		\frac{X_n}{\sqrt{\mathbb{E}_{\Lambda_n,\beta}(X_n^2)}}\Longrightarrow \text{ a random variable with density } C\exp(-C^4x^4)/\int_{-\infty}^{\infty} \exp(-t^4)dt ,
	\end{equation}
where 
    \begin{equation}
	C:=\sqrt{\Gamma(3/4)/\Gamma(1/4)}
	\end{equation}
with $\Gamma$ denoting the gamma function.
\end{corollary}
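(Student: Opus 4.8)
The plan is to specialize Theorem \ref{thm1} to the regime where the infinite product in \eqref{eq:f_W} is \emph{empty}, so that $f_W$ collapses to the pure quartic $K\exp[-\kappa_1 x^4]$ with $\kappa_1=1$, and then to carry out the elementary constant-chasing that turns this into the stated density. The corollary will follow once I verify two facts: (i) for $d\geq 1$ and $\beta$ sufficiently small, the set $\{a_j:j\geq 1\}$ is empty along every subsequence, and (ii) when that set is empty, $\kappa_1=1$ and the density $C_3 f_W(C_3 x)$ equals $C\exp(-C^4 x^4)/\int_{-\infty}^\infty e^{-t^4}\,dt$ with $C=\sqrt{\Gamma(3/4)/\Gamma(1/4)}$. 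Note that once (i) holds, there is a \emph{unique} subsequential limit, so the full sequence converges and the statement is genuinely about $n\to\infty$, not just subsequences.

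For step (i), recall that $a_j=\lim_k \alpha_{j,n_k}[-u_4(Y_{n_k})/4!]^{1/4}$. The set $\{a_j\}$ is empty precisely when $\alpha_{j,n}[-u_4(Y_n)/4!]^{1/4}\to\infty$ for every fixed $j$, and since the $\alpha_{j,n}$ are increasing in $j$ it suffices to treat $j=1$. Here I would invoke the two inputs flagged in the paragraph preceding the corollary: by \cite{Rue71} (or Theorem~A of \cite{PR20}), for $\beta$ small there is $\delta=\delta(\beta,d)>0$ with $\alpha_{1,n}\geq\delta$ uniformly in $n$; and $-u_4(Y_n)$ grows at least linearly in $|\Lambda_n|$, hence $-u_4(Y_n)\to\infty$. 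Multiplying, $\alpha_{1,n}[-u_4(Y_n)/4!]^{1/4}\geq \delta\,[-u_4(Y_n)/4!]^{1/4}\to\infty$, so no finite $a_j$ survives in the limit and the product in \eqref{eq:f_W} is empty. This forces $\beta_1(d):=\min\{\beta_c(d),\sup\{\beta:\alpha_{1,n}\text{ bounded below uniformly in }n\}\}>0$. The mild technical point to be careful about is that $\alpha_{1,n}\geq\delta$ must hold for \emph{all} $n$, not merely large $n$, and that the linear lower bound on $-u_4(Y_n)$ (which can be obtained, e.g., from the GHS/Lebowitz inequalities or by a direct expansion at small $\beta$, uniformly down to $\beta=0$) is valid throughout $[0,\beta_1(d))$; I would spell out the lower bound on $-u_4$ explicitly in Section \ref{subsec:cor}.

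For step (ii), with an empty product $f_W(x)=K\exp[-x^4]$ (since $\kappa_1=1-\tfrac12\sum_j a_j^{-4}=1$ when the sum is over the empty set). Normalization gives $K=1/\int_{-\infty}^\infty e^{-t^4}\,dt$, and the substitution $t=s^{1/4}$ identifies $\int_{-\infty}^\infty e^{-t^4}\,dt=\tfrac12\Gamma(1/4)$; similarly $\mathbb{E}(W^2)=K\int_{-\infty}^\infty x^2 e^{-x^4}\,dx=\tfrac12\Gamma(3/4)/(\tfrac12\Gamma(1/4))=\Gamma(3/4)/\Gamma(1/4)$, so $C_3=\sqrt{\mathbb{E}(W^2)}=C$ exactly. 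Then $f_X(x)=C_3 f_W(C_3 x)=C\,K\exp[-C^4 x^4]=C\exp(-C^4x^4)/\int_{-\infty}^\infty e^{-t^4}\,dt$, which is the claimed density. The only obstacle here is bookkeeping of Gamma-function identities, and I would present this as a short computation.

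The main obstacle is step (i), and specifically the uniform-in-$n$ lower bound $\alpha_{1,n}\geq\delta$: it is the one ingredient not contained in Theorem \ref{thm1} and it is what pins down the threshold $\beta_1(d)$. Everything else is either a direct application of Theorem \ref{thm1} or an elementary integral evaluation.
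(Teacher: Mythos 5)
Your proposal follows the same high-level route as the paper: specialize Theorem~\ref{thm1} to the case where the product over the $a_j$'s is empty, observe that this forces $\kappa_1=1$ and hence $f_W(x)=K e^{-x^4}$, and then carry out the Gamma-function bookkeeping to obtain the explicit constants $C$ and $K$. Your computations $\int_{-\infty}^\infty e^{-t^4}\,dt=\tfrac12\Gamma(1/4)$, $\mathbb{E}(W^2)=\Gamma(3/4)/\Gamma(1/4)$, $C_3=C$, and the reduction to showing $\alpha_{1,n}[-u_4(Y_n)/4!]^{1/4}\to\infty$ all match the paper exactly, as does the use of \cite{Rue71}/\cite{PR20} for the uniform lower bound on $\alpha_{1,n}$.

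The one place where you deviate from the paper --- and where your sketch is actually on shaky ground --- is the linear-in-$|\Lambda_n|$ lower bound on $-u_4(Y_n)$. You suggest this can be obtained from the GHS or Lebowitz inequalities, but those give $u_4(Y_n)\leq 0$ (an upper bound), not a quantitative lower bound on $-u_4(Y_n)$, so that route does not work as stated. The alternative you mention, a direct small-$\beta$ expansion from the $\beta=0$ value $-u_4(Y_n)=2|\Lambda_n|$, would work but is restricted to small $\beta$ and requires some care to make uniform. The paper instead gets the bound for \emph{all} $\beta\geq 0$ with no expansion: it uses the $2\pi i$-periodicity of $\langle\exp(zY_n)\rangle_{\Lambda_n,\beta}$ in $h$ to show that the full set of Lee-Yang zeros is $\{\pm i(\theta_{l,n}/2 + k\pi): 1\leq l\leq |\Lambda_n|,\ k\geq 0\}$ with $\theta_{l,n}\in(0,2\pi)$, and then combines this with the cumulant identity $-u_4(Y_n)/4!=\tfrac12\sum_j\alpha_{j,n}^{-4}$ from \eqref{eq:cumdef} to get $-u_4(Y_n)/4!\geq \tfrac{|\Lambda_n|}{2}\sum_{k\geq 0}((k+1)\pi)^{-4}$ directly. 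This is cleaner and strictly more general; you should replace the GHS/Lebowitz hint with either the small-$\beta$ expansion (spelled out) or, better, this zero-counting argument.
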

\begin{remark}\label{rem:Y_ngen}
	\eqref{eq:f_X} and \eqref{eq:f_W} imply that there exist $C_4, C_5\in(0,\infty)$ such that
	\begin{equation}
		f_X(x)=C_4-C_5x^4+O(x^6) \text{ as }x\rightarrow0.
	\end{equation}
	(See, e.g., \eqref{eq:f_Wsmallx1} below for a derivation.)
	So any subsequential limit of $\{X_n/\sqrt{\mathbb{E}_{\Lambda_n,\beta}(X_n^2)}:n\in\mathbb{N}\}$ is not Gaussian. Theorem \ref{thm1} and Corollary \ref{cor1} also hold (with a similar proof) if $Y_n$ is the total magnetization in $\Lambda_n$ from the unique infinite-volume Gibbs measure (rather than from $\Lambda_n$ with free boundary conditions).
\end{remark}

\begin{remark}
	We expect that $\beta_1(d)=\beta_c(d)$ should be valid in Corollary \ref{cor1}. In the Appendix, we prove that when $\beta<\beta_c(d)$, the limiting distribution (or measure) of Lee-Yang zeros has no mass in an arc containing $\exp(i0)$ of the unit circle. This is an indication that $\beta_1(d)$ ought to be $\beta_c(d)$.
\end{remark}

\begin{remark}
	When $d\geq 4$ and $\beta=\beta_c(d)$, one possible candidate for $f_X(x)$ is the symmetric mixture of two independent normal densities with opposite means $\pm \sqrt{2}/2$ and variance~$1/2$.
\end{remark}

\subsection{Discussion about high-dimensional Ising model with periodic boundary conditions}\label{subsec:HighDIsing}
We now consider the Ising model on $\Lambda_n$ at inverse temperature $\beta\geq 0$ with periodic boundary conditions. Let $\langle\cdot\rangle_{\Lambda_n,\beta,p}$ denote its expectation and $\bar{Y}_n:=\sum_{u\in\Lambda_n}\sigma_u$ be its total magnetization, where the overbar is a reminder that we are using periodic boundary conditions. The infinite product representation for the moment generating function given in Lemma \ref{lem:Y_nfactor} below also holds for this case. So we have for any $d\geq1$ and $\beta\geq 0$,
\begin{align}\label{eq:Y_npmgf}
	&\left\langle \exp\left(z\frac{\bar{Y}_n}{\sqrt{\langle \bar{Y}_n^2 \rangle_{\Lambda_n,\beta,p}}}\right)\right\rangle_{\Lambda_n,\beta,p}\nonumber\\
	&\qquad=\exp\left(\frac{z^2}{2}\right)\prod_{j\geq 1}\left[\left(1+\frac{z^2}{\bar{\alpha}^2_{j,n}\langle \bar{Y}_n^2 \rangle_{\Lambda_n,\beta,p}}\right)\exp\left(-\frac{z^2}{\bar{\alpha}_{j,n}^2\langle \bar{Y}_n^2 \rangle_{\Lambda_n,\beta,p}}\right)\right],
\end{align}
where $0<\bar{\alpha}_{1,n}\leq\bar{\alpha}_{2,n}\leq\dots$, $\{\pm i\bar{\alpha}_{j,n}:j\geq 1\}$ are all the zeros (listed with multiplicities) of $\langle \exp(z\bar{Y}_n)\rangle_{\Lambda_n,\beta,p}$, and $\sum_{j\geq 1}1/\bar{\alpha}_{j,n}^2\leq\langle \bar{Y}_n^2\rangle_{\Lambda_n,\beta,p}/2$. Combining \eqref{eq:Y_npmgf} with the inequality $|(1+y)\exp(-y)|\leq\exp(2|y|)$ gives that
\begin{equation}
	\left|\left\langle \exp\left(z\frac{\bar{Y}_n}{\sqrt{\langle \bar{Y}_n^2 \rangle_{\Lambda_n,\beta,p}}}\right)\right\rangle_{\Lambda_n,\beta,p}\right|\leq \exp\left(\frac{3|z|^2}{2}\right) \text{ for any }z\in\mathbb{C}.
\end{equation}
Therefore, $\{\langle \exp(z\bar{Y}_n/\sqrt{\langle \bar{Y}_n^2 \rangle_{\Lambda_n,\beta,p}})\rangle_{\Lambda_n,\beta,p}:n\in\mathbb{N}\}$ is locally uniformly bounded. Suppose that
\begin{equation}
	\bar{Y}_n/\sqrt{\langle \bar{Y}_n^2 \rangle_{\Lambda_n,\beta,p}}\Longrightarrow \bar{Y} \text{ as }n\rightarrow \infty.
\end{equation}
Then we have
\begin{equation}
	\lim_{n\rightarrow\infty}\left\langle \exp\left(it\bar{Y}_n/\sqrt{\langle \bar{Y}_n^2 \rangle_{\Lambda_n,\beta,p}}\right)\right\rangle=\mathbb{E}\exp(it\bar{Y}) \text{ for any }t\in\mathbb{R}.
\end{equation}
So Vitali's theorem (see, e.g., Theorem B.25 of \cite{FV17}) implies that
\begin{equation}\label{eq:Y_npmgfconv}
    \lim_{n\rightarrow\infty}\left\langle \exp\left(z\bar{Y}_n/\sqrt{\langle \bar{Y}_n^2 \rangle_{\Lambda_n,\beta,p}}\right)\right\rangle=\mathbb{E}\exp(z\bar{Y}) \text{ locally uniformly on  }\mathbb{C}.
\end{equation}
Theorem 7 of \cite{NW19} and Theorem \ref{thm:new75} below give that 
\begin{equation}\label{eq:Ypmgf}
	\mathbb{E}\exp(z\bar{Y})=\exp\left(\frac{z^2}{2}\right)\prod_{j\geq 1}\left[\left(1+\frac{z^2}{\bar{a}_j^2}\right)\exp\left(-\frac{z^2}{\bar{a}_j^2}\right)\right],~\forall z\in\mathbb{C},
\end{equation}
where $\{\pm i\bar{a}_j:j\geq 1\}$ are all the zeros of  $\mathbb{E}\exp(z\bar{Y})$ with $0<\bar{a}_1\leq\bar{a}_2\leq\dots$. By Hurwitz's theorem (see Lemma \ref{lem:Hur} below), we have
\begin{equation}\label{eq:alphapcon}
	\lim_{n\rightarrow\infty}\bar{\alpha}_{j,n}\sqrt{\langle \bar{Y}_n^2 \rangle_{\Lambda_n,\beta,p}}=\bar{a}_j \text{ for each }j\geq 1.
\end{equation}

In the rest of this subsection, we focus on $d> 4$ and $\beta=\beta_c:=\beta_c(d)$. Here $d=4$ is the upper critical dimension, and we exclude $d=4$ since the corresponding critical behavior is subtle and various quantities may have logarithmic corrections. By a conditional result of \cite{Pap06}, Theorem 2 there,  it is expected that
\begin{equation}
	\liminf_{n\rightarrow\infty}\bar{\alpha}_{1,n}^2\langle \bar{Y}_n^2 \rangle_{\Lambda_n,\beta_c,p}<\infty
\end{equation}
since otherwise \eqref{eq:Ypmgf} and \eqref{eq:alphapcon} would imply that $\bar{Y}$ is standard normal.

Recall that $u_4(\bar{Y}_n)$ is the fourth cumulant of $\bar{Y}_n$. It is well-known  (see Theorem 5 of \cite{New75} or (5.3) of \cite{Aiz82}) that
\begin{equation}
	0\leq -u_4(\bar{Y}_n)\leq 3\left[\langle \bar{Y}_n^2 \rangle_{\Lambda_n,\beta_c,p}\right]^2
\end{equation}
It is also known (see, e.g., \eqref{eq:Y_nmgfbds}-\eqref{eq:u_4overvar} below for a proof) that if $\bar{Y}$ is not standard normal, then 
\begin{equation}
	\liminf_{n\rightarrow\infty}\frac{-u_4(\bar{Y}_n)}{\left[\langle \bar{Y}_n^2 \rangle_{\Lambda_n,\beta_c,p}\right]^2}>0.
\end{equation}
So it is expected that there exists $C_6\in(0,\infty)$ such that
\begin{equation}
	\lim_{n\rightarrow\infty}\frac{-u_4(\bar{Y}_n)}{\left[\langle \bar{Y}_n^2 \rangle_{\Lambda_n,\beta_c,p}\right]^2}=C_6^{-4}.
\end{equation}

To summarize, we formulate a conjecture relating $\bar{Y}$ to the asymptotic behavior of the $\bar{\alpha}_{j,n}$'s.
\begin{conjecture}\label{conj1}
	For the critical Ising model on $\Lambda_n$ with periodic boundary conditions and $d>4$. The following limits exist
	\begin{equation}
		\bar{a}_j:=\lim_{n\rightarrow\infty}\bar{\alpha}_{j,n}\sqrt{\langle \bar{Y}_n^2 \rangle_{\Lambda_n,\beta_c,p}}=C_6\lim_{n\rightarrow\infty}\bar{\alpha}_{j,n}\left[-u_4(\bar{Y}_n)\right]^{1/4} \text{ for each }j\geq1.
	\end{equation}
Hence, we have
\begin{equation}
	\bar{Y}_n/\sqrt{\langle \bar{Y}_n^2 \rangle_{\Lambda_n,\beta_c,p}}\Longrightarrow \bar{Y} \text{ as }n\rightarrow \infty,
\end{equation}
where $\bar{Y}$ has moment generating function
\begin{equation}
	\mathbb{E}\exp(z\bar{Y})=\exp\left(\frac{z^2}{2}\right)\prod_{j\geq 1}\left[\left(1+\frac{z^2}{\bar{a}_j^2}\right)\exp\left(-\frac{z^2}{\bar{a}_j^2}\right)\right],~\forall z\in\mathbb{C}.
\end{equation}
\end{conjecture}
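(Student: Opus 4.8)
Since the statement is open, this is a program rather than a proof; throughout, write $\chi_n:=\langle\bar Y_n^2\rangle_{\Lambda_n,\beta_c,p}$. The plan is to isolate the one genuinely new input the conjecture needs---the finite-size convergence of the rescaled Lee--Yang zeros on the torus---and to observe that the rest is already ``soft''. Because $\bar Y_n/\sqrt{\chi_n}$ has unit second moment, Chebyshev's inequality makes the family tight, so subsequential limits $\bar Y$ exist. Along any subsequence $\{n_k\}$ realizing one, the locally uniform bound $|\langle\exp(z\bar Y_n/\sqrt{\chi_n})\rangle_{\Lambda_n,\beta_c,p}|\le\exp(3|z|^2/2)$ and Vitali's theorem upgrade the convergence of characteristic functions to locally uniform convergence of moment generating functions on $\mathbb C$, exactly as in \eqref{eq:Y_npmgfconv}; by Theorem~7 of \cite{NW19} and Theorem~\ref{thm:new75} the limit has the Lee--Yang product form $\exp(z^2/2)\prod_{j\ge1}[(1+z^2/(\bar a_j^{(k)})^{2})\exp(-z^2/(\bar a_j^{(k)})^{2})]$; Hurwitz's theorem (Lemma~\ref{lem:Hur}) applied to \eqref{eq:Y_npmgf} gives $\bar\alpha_{j,n_k}\sqrt{\chi_{n_k}}\to\bar a_j^{(k)}$ for each $j$; and Fatou applied to the uniform estimate $\sum_{j\ge1}(\bar\alpha_{j,n}^{2}\chi_n)^{-1}\le1/2$ yields $\sum_{j\ge1}(\bar a_j^{(k)})^{-2}\le1/2$. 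Thus it suffices to prove: \emph{(i)} for each $j$ the sequence $\bar\alpha_{j,n}\sqrt{\chi_n}$ converges to a limit $\bar a_j\in(0,\infty]$ (so the $\bar a_j^{(k)}$ do not depend on the subsequence) with $\bar a_1<\infty$; and \emph{(ii)} $\lim_{n\to\infty}(-u_4(\bar Y_n))/\chi_n^{2}$ exists (automatically in $(0,3]$ under \emph{(i)}), in which case $C_6$ is fixed by $C_6^{-4}=\lim_{n\to\infty}(-u_4(\bar Y_n))/\chi_n^{2}$; conversely \emph{(i)} and \emph{(ii)} are also forced by the conjecture, so this is the entire content.

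Granting \emph{(i)} and \emph{(ii)}, the remaining assertions follow quickly. Since $\sum_{j\ge1}\bar a_j^{-2}<\infty$, the genus-one product $\exp(z^2/2)\prod_{j\ge1}[(1+z^2/\bar a_j^{2})\exp(-z^2/\bar a_j^{2})]$ converges locally uniformly to an entire function; it is the common limit, along every subsequence, of the moment generating functions of $\bar Y_n/\sqrt{\chi_n}$, so the subsequential limit $\bar Y$ is unique, it has this moment generating function, and by tightness $\bar Y_n/\sqrt{\chi_n}\Longrightarrow\bar Y$. The identity with $u_4$ comes from $\bar\alpha_{j,n}[-u_4(\bar Y_n)]^{1/4}=\bar\alpha_{j,n}\sqrt{\chi_n}\cdot\big((-u_4(\bar Y_n))/\chi_n^{2}\big)^{1/4}\to\bar a_j/C_6$. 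Finally, the product is nontrivial---some $\bar a_j<\infty$---precisely when $\bar Y$ is non-Gaussian, which by the estimates recalled just before the conjecture is precisely when $\liminf_n(-u_4(\bar Y_n))/\chi_n^{2}>0$; so once the limit in \emph{(ii)} is known to be positive the non-Gaussianity (the point of the conjecture) is automatic.

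The main obstacle is \emph{(i)}, and it is the crux of the whole high-dimensional periodic-boundary problem. Already its weakest nontrivial consequence, $\liminf_n\bar\alpha_{1,n}^{2}\chi_n<\infty$, is open: it is tantamount to the ``plateau upper bound'' $\chi_n=O(n^{d/2})$, complementing the $n^{d/2}$ lower bound of \cite{Pap06}. I would attack it through a lace expansion---or a random-current representation---on the torus $\Lambda_n$, in the spirit of the plateau proofs for self-avoiding walk and percolation \cite{Sla20,HMS21}: first establish $\chi_n\asymp n^{d/2}$, then control the truncated four- and higher-point functions uniformly in $n$ well enough to identify both $\lim_n(-u_4(\bar Y_n))/\chi_n^{2}$ and the limiting law; an alternative route is a direct comparison with the complete-graph (Curie--Weiss) model that makes rigorous the ``complete-graph paradigm'' of \cite{Pap06}. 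I expect the torus geometry---the coupling between the zero Fourier mode, which carries the magnetization, and the nonzero modes---to be the principal source of difficulty, and I would first aim only at proving that every subsequential limit $\bar Y$ is non-Gaussian before attempting the full convergence. Note, finally, that the machinery of Theorem~\ref{thm1} does not bypass any of this: the torus case is analytically parallel to the Curie--Weiss perturbation of the free-boundary model and shares all its soft inputs, but the hard input---the asymptotics of the Lee--Yang zeros---is exactly what is unknown here.
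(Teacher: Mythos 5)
This statement is labelled a conjecture and the paper offers no proof of it, so there is no argument in the paper to compare against; you are right to present a program rather than a proof. Your ``soft'' reductions faithfully reproduce the discussion the paper gives immediately before the conjecture: tightness from unit variance, upgrading of characteristic-function convergence to locally uniform MGF convergence via the bound $\exp(3|z|^2/2)$ and Vitali's theorem (this is \eqref{eq:Y_npmgfconv}), the Lee--Yang product form via Theorem~7 of \cite{NW19} and Theorem~\ref{thm:new75} (this is \eqref{eq:Ypmgf}), Hurwitz's theorem for the rescaled zeros (this is \eqref{eq:alphapcon}), and Fatou for $\sum_j \bar a_j^{-2}\le 1/2$. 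Your reduction of the conjecture to items \emph{(i)} and \emph{(ii)}, and your observation that \emph{(i)} is the crux, is accurate and is consistent with the paper's own remark that $\liminf_n \bar\alpha_{1,n}^2\chi_n<\infty$ is only ``expected'' (via a conditional result of \cite{Pap06}).

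Two small points worth tightening. First, with your normalization $\chi_n:=\langle\bar Y_n^2\rangle_{\Lambda_n,\beta_c,p}$ (the \emph{total} variance, not the per-site susceptibility), the plateau upper bound would read $\chi_n=O(n^{3d/2})$, not $O(n^{d/2})$; the $n^{d/2}$ exponent in \cite{Pap06} refers to $\chi_n/|\Lambda_n|$. Second, ``tantamount to the plateau upper bound'' overstates what is known: the paper only says that Papathanakos's (conditional) Theorem~2 \emph{suggests} $\liminf_n\bar\alpha_{1,n}^2\chi_n<\infty$, and the logical equivalence between that finiteness and a plateau upper bound is itself unproved. It would be more accurate to say the two statements are believed to stand or fall together, with the plateau upper bound being the presumed route to the zero asymptotics, rather than asserting an equivalence. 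With those caveats, your assessment of where the genuine difficulty lies --- the finite-size asymptotics of the torus Lee--Yang zeros, equivalently the coupling between the zero Fourier mode and the rest --- matches the state of the art as the paper describes it.
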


Comparing Theorem \ref{thm1} and Conjecture \ref{conj1}, it is natural to ask whether
\begin{equation}\label{eq:cwpp}
	\lim_{n\rightarrow\infty}\alpha_{j,n}[-u_4(Y_n)]^{1/4}=\lim_{n\rightarrow\infty}\bar{\alpha}_{j,n}\left[-u_4(\bar{Y}_n)\right]^{1/4} \text{ for each }j
\end{equation}
or at least whether the ratio of these two limits is the same constant for all $j\geq 1$. Even though the behavior of $Y_n$ (under free boundary conditions) is quite different from that of $\bar{Y}_n$ (under periodic boundary conditions), the answer to the above question could be positive.

In Section \ref{sec:Gautran}, we use a Gaussian transform of $X_n$ to obtain a new random variable, $W_n$, whose density is directly related to the moment generating function of $Y_n$. Then we use the asymptotics of $Y_n$ to show that $W_n$ and $X_n$ (after appropriate rescalings) converge along the same subsequence and yield the same limit up to a linear transformation. In Section~\ref{sec:proofs}, we first prove that for any subsequential limiting distribution of the normalized $X_n$, $F_X$, the moment generating  function of $\exp(-bx^2)dF_X(x)$ for each positive $b$ has only pure imaginary zeros (see Proposition \ref{prop:ZLY}); this enables us to finish the proof of Theorem~\ref{thm1} by applying a complete classification result about such distributions from \cite{New76}. In the Appendix, we prove that the limiting distribution of Lee-Yang zeros has no mass in an arc containing $\exp(i0)$ of the unit circle for any $\beta<\beta_c(d)$.

\section{A Gaussian transform and moment generating functions}\label{sec:Gautran}
\subsection{A Gaussian transform}
In order to study the limit of $X_n/\sqrt{\mathbb{E}_{\Lambda_n,\beta}(X_n^2)}$, as a tool, we add to $X_n$ a multiple of a standard normal random variable $N(0,1)$, which is independent of $X_n$. That is 
\begin{equation}\label{eq:Wndef}
	W_n:=X_n+\sqrt{\lambda_n}N(0,1),
\end{equation} 
where $\lambda_n\geq 0$ will be defined later.

To simply the notation, we define
\begin{equation}\label{eq:gamma_ndef}
	\gamma_n:=\frac{1}{2\langle Y_n^2 \rangle_{\Lambda_n,\beta}}.
\end{equation}
Then we have
\begin{equation}\label{eq:RN}
	\frac{d F_{X_n}}{d F_{Y_n}}(x)=\frac{\exp[\gamma_nx^2]}{\left\langle \exp[\gamma_nY_n^2]\right\rangle_{\Lambda_n,\beta}}.
\end{equation}
	
\begin{lemma}
	The random variable $W_n$ has density
	\begin{equation}\label{eq:W_ndensity1}
		f_{W_n}(x)=\frac{\exp[-x^2/(2\lambda_n)]}{\sqrt{2\pi\lambda_n}\langle \exp(\gamma_n Y_n^2)\rangle_{\Lambda_n,\beta}}\int_{-\infty}^{\infty}\exp\left(\frac{xy}{\lambda_n}\right)\exp\left[\left(\gamma_n-\frac{1}{2\lambda_n}\right)y^2\right] dF_{Y_n}(y).
	\end{equation}
\end{lemma}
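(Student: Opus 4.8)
The plan is to view $W_n$ as the sum of the two independent random variables $X_n$ and $\sqrt{\lambda_n}\,N(0,1)$, and to compute its density as a convolution. I assume throughout that $\lambda_n>0$, so that $\sqrt{\lambda_n}\,N(0,1)$ has the density $t\mapsto(2\pi\lambda_n)^{-1/2}\exp[-t^2/(2\lambda_n)]$; the degenerate case $\lambda_n=0$ is excluded here, since then $W_n=X_n$ has a purely atomic law and no density. Note also that $Y_n$, and hence $X_n$, takes only finitely many values on the finite set $\Lambda_n$, so $F_{Y_n}$ (equivalently $F_{X_n}$) is supported on a finite set; consequently every integral below is a finite sum and there are no convergence or measurability subtleties.

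First I would write, by independence of $X_n$ and $\sqrt{\lambda_n}\,N(0,1)$,
\begin{equation*}
	f_{W_n}(x)=\int_{-\infty}^{\infty}\frac{1}{\sqrt{2\pi\lambda_n}}\exp\left[-\frac{(x-y)^2}{2\lambda_n}\right]dF_{X_n}(y).
\end{equation*}
Next I would substitute the Radon--Nikodym identity \eqref{eq:RN}, i.e. $dF_{X_n}(y)=\exp[\gamma_ny^2]\,dF_{Y_n}(y)\big/\langle\exp[\gamma_nY_n^2]\rangle_{\Lambda_n,\beta}$, to obtain
\begin{equation*}
	f_{W_n}(x)=\frac{1}{\sqrt{2\pi\lambda_n}\,\langle\exp[\gamma_nY_n^2]\rangle_{\Lambda_n,\beta}}\int_{-\infty}^{\infty}\exp\left[-\frac{(x-y)^2}{2\lambda_n}+\gamma_ny^2\right]dF_{Y_n}(y).
\end{equation*}

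Finally I would expand $(x-y)^2=x^2-2xy+y^2$ inside the exponent, so that
\begin{equation*}
	-\frac{(x-y)^2}{2\lambda_n}+\gamma_ny^2=-\frac{x^2}{2\lambda_n}+\frac{xy}{\lambda_n}+\left(\gamma_n-\frac{1}{2\lambda_n}\right)y^2,
\end{equation*}
and pull the factor $\exp[-x^2/(2\lambda_n)]$, which does not depend on $y$, out of the integral; this yields exactly \eqref{eq:W_ndensity1}. The argument is entirely routine, and there is no real obstacle: the only point requiring a word of care is the standing assumption $\lambda_n>0$, which is what makes the Gaussian density, and hence $f_{W_n}$ itself, well defined and the above manipulation legitimate.
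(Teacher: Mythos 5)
Your proof is correct and follows exactly the same route as the paper's one-line proof, which simply cites the definition of $W_n$, the Radon--Nikodym identity \eqref{eq:RN}, and the convolution formula; you have merely written out the convolution and the completion of the square explicitly. The remark that $\lambda_n>0$ is needed is sound, and indeed $\lambda_n=\langle Y_n^2\rangle_{\Lambda_n,\beta}>0$ always holds here.
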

\begin{proof}
	This follows from \eqref{eq:Wndef}, \eqref{eq:RN} and the convolution formula (see, e.g., Theorem 2.1.16 of \cite{Dur19}).
\end{proof}

In order to get rid of the exponential function with $y^2$ term in the integrand of \eqref{eq:W_ndensity1}, we fix $\lambda_n$ once and for all:
\begin{equation}\label{eq:lambda_ndef}
	\lambda_n:=\langle Y_n^2 \rangle_{\Lambda_n,\beta}.
\end{equation}
Then we have
\begin{equation}\label{eq:W_ndensity}
		f_{W_n}(x)=\frac{\exp[-x^2/(2\lambda_n)]}{\sqrt{2\pi\lambda_n} \langle \exp(\gamma_n Y_n^2)\rangle_{\Lambda_n,\beta}}\left\langle \exp[xY_n/\lambda_n] \right \rangle_{\Lambda_n,\beta}.
\end{equation}
	
\subsection{Random variables of Lee-Yang type}
A random variable $X$ is said to be of LY type if
\begin{enumerate}[(a)]
	\item $X\overset{d}{=}-X$ (i.e., $\mathbb{P}(X\in B)=\mathbb{P}(-X\in B)$ for any Borel set $B$),
	\item $\mathbb{E}\exp(DX^2)<\infty$ for some $D>0$,
	\item $\mathbb{E}\exp(zX)$ has only pure imaginary zeros.
\end{enumerate}

The following theorem from \cite{New75} says that the moment generating function of a LY type random variable has a nice factorization formula. 
\begin{theorem}[\cite{New75}]\label{thm:new75}
	If $X$ is of LY type, then
	\begin{equation}
		\mathbb{E}\exp(zX)=\exp(bz^2)\prod_{j\geq 1}\left[1+\frac{z^2}{\alpha_j^2}\right],~z\in\mathbb{C},
	\end{equation}
for some $b\geq 0$ and $0<\alpha_1\leq\alpha_2\leq\dots$, with $\sum_{j\geq 1}1/\alpha_j^2<\infty$; $\{\alpha_j:j\geq 1\}$ may be empty, finite or infinite. Moreover,
\begin{align}
	&u_2(X)=2\left(b+\sum_{j\geq 1}\frac{1}{\alpha_j^2}\right),\label{eq:bdef}\\
	&u_{2m}(X)=(-1)^{m-1}\frac{(2m)!}{m}\sum_{j\geq 1}\frac{1}{\alpha_j^{2m}},~ m\geq 2 \text{ and } m\in\mathbb{N}.\label{eq:cumdef}
\end{align}
\end{theorem}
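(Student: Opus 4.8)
The plan is to read the factorization straight off the Hadamard factorization theorem, using the three defining properties of an LY-type random variable to pin down the genus of the canonical product and the shape of the exponential prefactor.

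First I would record the analytic input. Set $\phi(z):=\mathbb{E}\exp(zX)$. From the elementary bound $|z|\,|X|\le |z|^2/(4D)+DX^2$ and property~(b) we get $|\phi(z)|\le\bigl(\mathbb{E}e^{DX^2}\bigr)\exp\bigl(|z|^2/(4D)\bigr)$ for all $z\in\mathbb{C}$, so $\phi$ is entire of order at most $2$ (of finite type), all moments and cumulants of $X$ exist, and $\phi$ is analytic near $0$ with $\phi(0)=1$. Property~(a) makes $\phi$ even and forces $\phi(t)=\mathbb{E}\cosh(tX)\ge1$ for every real $t$; in particular $\phi$ has no real zero. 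Property~(c) then says every zero of $\phi$ is purely imaginary, and since $\phi$ has real Taylor coefficients and is even, the zeros occur in packets $\pm i\alpha_j$ with $0<\alpha_1\le\alpha_2\le\cdots$, matching multiplicities. Next I would apply Hadamard: since $\phi$ has order at most $2$ its genus is at most $2$, so $\phi(z)=e^{Q(z)}\prod_k E_p(z/z_k)$ with $\deg Q\le2$, $p\le2$, $E_p$ the Weierstrass elementary factors, and the $z_k$ running over the zeros $\pm i\alpha_j$. Using $\phi(0)=1$, the evenness of $\phi$ and of the canonical product, and the reality of the coefficients, one gets $Q(z)=bz^2$ for a real constant $b$. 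A direct computation of $E_p(z/(i\alpha_j))\,E_p(z/(-i\alpha_j))$ shows that the exponential parts cancel within each conjugate pair, giving the factor $1+z^2/\alpha_j^2$ when $p\le1$ and $(1+z^2/\alpha_j^2)\exp(-z^2/\alpha_j^2)$ when $p=2$.

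The crux is to eliminate $p=2$. If $p=2$ then $\sum_j\alpha_j^{-2}=\infty$; taking logarithms in $\phi(t)=e^{bt^2}\prod_j(1+t^2/\alpha_j^2)e^{-t^2/\alpha_j^2}$ for real $t$ and using $\phi(t)\ge1$ yields $\sum_j\bigl[t^2/\alpha_j^2-\log(1+t^2/\alpha_j^2)\bigr]\le bt^2$. Restricting the (nonnegative) sum to the first $N$ terms, dividing by $t^2$, and letting $t\to\infty$ gives $\sum_{j\le N}\alpha_j^{-2}\le b$ for every $N$, hence $\sum_j\alpha_j^{-2}\le b<\infty$, a contradiction. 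So $p\le1$, whence $\phi(z)=e^{bz^2}\prod_j(1+z^2/\alpha_j^2)$ with $\sum_j\alpha_j^{-2}<\infty$ (genus $\le1$ forces exactly this summability); and $b\ge0$, since otherwise the right-hand side tends to $0$ as $t\to\infty$, contradicting $\phi(t)\ge1$. Finally, \eqref{eq:bdef} and \eqref{eq:cumdef} follow by expanding $\log\phi(z)=bz^2+\sum_j\sum_{m\ge1}\frac{(-1)^{m-1}}{m}\alpha_j^{-2m}z^{2m}$ for $z$ near $0$, where the double series converges absolutely, and matching coefficients against $\log\mathbb{E}e^{zX}=\sum_n u_n(X)z^n/n!$ (the odd cumulants vanishing by evenness).

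I expect the genus-elimination step to be the only real obstacle: it is the one point where symmetry~(a), in the form $\phi(t)\ge1$, must be played against the growth bound from~(b) and the Lee–Yang property~(c), and everything else is bookkeeping with the Hadamard product and Taylor expansions. A minor point to handle carefully is that $\phi$ genuinely has a well-defined canonical-product genus in $\{0,1,2\}$ — true because the exponent of convergence of its zeros is at most its order, which is $\le2$ — so that the case split on $p$ above is exhaustive.
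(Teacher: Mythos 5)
Your proof is correct. The paper states this theorem by citing \cite{New75} without reproducing a proof, and the Hadamard-factorization argument you give — property (b) for the order bound, property (a) for evenness, reality of coefficients and the lower bound $\phi(t)\ge1$, property (c) for the location of the zeros, and then eliminating genus $2$ and the possibility $b<0$ — is the standard route and the one underlying the cited reference. One step you leave implicit is the assertion that $b<0$ forces $\phi(t)\to0$: this needs the observation that $\prod_j\bigl(1+t^2/\alpha_j^2\bigr)=\exp\bigl(o(t^2)\bigr)$ as $t\to\infty$ once $\sum_j\alpha_j^{-2}<\infty$, which follows by the same first-$N$-terms-plus-small-tail split you already use in the genus-$2$ elimination, so it is genuinely just bookkeeping.
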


\subsection{Asymptotics of $Y_n$ and $W_n$}
A direct application of Theorem \ref{thm:new75} expressing $b$ using \eqref{eq:bdef} gives
\begin{lemma} \label{lem:Y_nfactor}
	For any $d\geq 1$ and $\beta\geq 0$, we have that for any $z\in\mathbb{C}$, 
	\begin{equation}\label{eq:Y_nmgffactor}
		\langle \exp(zY_n)\rangle_{\Lambda_n,\beta}=\exp\left[z^2\langle Y_n^2\rangle_{\Lambda_n,\beta}/2\right]\prod_{j\geq 1}\left[\left(1+\frac{z^2}{\alpha^2_{j,n}}\right)\exp\left(-\frac{z^2}{\alpha_{j,n}^2}\right)\right],
	\end{equation}
where $0<\alpha_{1,n}\leq\alpha_{2,n}\leq\dots$, $\{\pm i\alpha_{j,n}:j\geq 1\}$ are all zeros (listed according to their multiplicities) of $\langle \exp(zY_n)\rangle_{\Lambda_n,\beta}$, and $\sum_{j\geq 1}1/\alpha_{j,n}^2\leq\langle Y_n^2\rangle_{\Lambda_n,\beta}/2$.
\end{lemma}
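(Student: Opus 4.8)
The statement to prove is Lemma~\ref{lem:Y_nfactor}, which asserts the infinite-product factorization \eqref{eq:Y_nmgffactor} for the moment generating function of the total magnetization $Y_n$ under free boundary conditions, together with the bound $\sum_{j\geq 1}1/\alpha_{j,n}^2 \leq \langle Y_n^2\rangle_{\Lambda_n,\beta}/2$.

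The plan is to verify that $Y_n$ (for fixed $n$) is a random variable of LY type in the sense of the definition given just above Theorem~\ref{thm:new75}, and then to simply invoke that theorem together with its cumulant formulas \eqref{eq:bdef}--\eqref{eq:cumdef}. So the work splits into checking the three conditions (a), (b), (c). For (a): $Y_n \overset{d}{=} -Y_n$ follows from the spin-flip symmetry $\sigma \mapsto -\sigma$ of the free-boundary Ising Hamiltonian at $h=0$, which leaves $\langle\cdot\rangle_{\Lambda_n,\beta}$ invariant and sends $Y_n$ to $-Y_n$. For (b): $Y_n$ takes values in the finite set $\{-|\Lambda_n|,\dots,|\Lambda_n|\}$, so $\mathbb{E}\exp(DY_n^2) < \infty$ for every $D$; in particular $\langle\exp(zY_n)\rangle_{\Lambda_n,\beta}$ is entire in $z$ (a finite exponential sum). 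For (c): this is precisely the Lee--Yang theorem \cite{LY52} already recalled in the introduction — all zeros of $\langle\exp(zY_n)\rangle_{\Lambda_n,\beta}$ (equivalently of the partition function as a function of the external field) are pure imaginary. So I would cite \cite{LY52} here.

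Once LY type is established, Theorem~\ref{thm:new75} gives $\langle\exp(zY_n)\rangle_{\Lambda_n,\beta} = \exp(b_n z^2)\prod_{j\geq 1}(1+z^2/\alpha_{j,n}^2)$ with $\sum_j 1/\alpha_{j,n}^2 < \infty$ and, by \eqref{eq:bdef}, $u_2(Y_n) = 2(b_n + \sum_j 1/\alpha_{j,n}^2)$. Since the mean of $Y_n$ vanishes by symmetry, $u_2(Y_n) = \langle Y_n^2\rangle_{\Lambda_n,\beta}$, so $b_n = \langle Y_n^2\rangle_{\Lambda_n,\beta}/2 - \sum_j 1/\alpha_{j,n}^2$. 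Substituting this value of $b_n$ and distributing $\exp(b_n z^2) = \exp\big[z^2\langle Y_n^2\rangle_{\Lambda_n,\beta}/2\big]\cdot\prod_j \exp(-z^2/\alpha_{j,n}^2)$ into each factor yields exactly \eqref{eq:Y_nmgffactor}. The bound $\sum_j 1/\alpha_{j,n}^2 \leq \langle Y_n^2\rangle_{\Lambda_n,\beta}/2$ is then immediate from $b_n \geq 0$.

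I do not anticipate a genuine obstacle here: the lemma is essentially a packaging of the Lee--Yang theorem and Theorem~\ref{thm:new75}, and the only point requiring a word of care is the identification of $b_n$, i.e.\ that the product $\prod_j \exp(-z^2/\alpha_{j,n}^2)$ converges (guaranteed by $\sum_j 1/\alpha_{j,n}^2 < \infty$ from Theorem~\ref{thm:new75}) so that the rearrangement of the exponential prefactor across the product is legitimate. The mildest subtlety is purely notational — making sure the zeros $\{\pm i\alpha_{j,n}\}$ are listed with multiplicities and ordered $0 < \alpha_{1,n}\leq\alpha_{2,n}\leq\cdots$ consistently with the statement — but this is bookkeeping rather than mathematics.
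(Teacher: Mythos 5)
Your proof is correct and is essentially the paper's own argument: the paper simply says the lemma follows by "a direct application of Theorem~\ref{thm:new75} expressing $b$ using \eqref{eq:bdef}," which is precisely what you do after checking the LY-type conditions (spin-flip symmetry, finiteness, Lee--Yang theorem). The only difference is that you spell out the routine verification that $Y_n$ is of LY type, which the paper leaves implicit.
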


We define for $n\in\mathbb{N}$,
\begin{align}
	&c_n:=\frac{\sqrt{\langle Y_n^2 \rangle_{\Lambda_n,\beta}}}{\sqrt{2\pi}\langle\exp[Y_n^2/(2\langle Y_n^2\rangle_{\Lambda_n,\beta})]\rangle_{\Lambda_n,\beta}}\left[\frac{4!}{-u_4(Y_n)}\right]^{1/4},\label{eq:c_ndef}\\
	&d_n:=\frac{1}{\langle Y_n^2 \rangle_{\Lambda_n,\beta}}\left[\frac{-u_4(Y_n)}{4!}\right]^{1/4}.\label{eq:d_ndef}
\end{align}
Then we have 
\begin{lemma}\label{lem:d_nW_n}
For any $d\geq 1$ and $\beta\geq 0$, the family of random variables $\{d_nW_n:n\in\mathbb{N}\}$ is tight --- more precisely,
	\begin{equation}
		c_n\exp(-x^4)\leq f_{d_nW_n}(x)\leq\frac{c_n}{1+x^4/3} \text{ for any }x\in\mathbb{R}\label{eq:d_nW_ndesitybd}
	\end{equation}
with the constants $c_n$ satisfying the bounds
\begin{equation}
		\left[\int_{-\infty}^{\infty}\frac{1}{1+x^4/3}dx\right]^{-1}\leq c_n \leq \left[\int_{-\infty}^{\infty}\exp(-x^4)dx\right]^{-1} \text{ for any }n\in\mathbb{N}.\label{eq:c_nbd}
\end{equation}
\end{lemma}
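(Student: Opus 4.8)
The plan is to start from the explicit density formula \eqref{eq:W_ndensity} and feed in the factorization \eqref{eq:Y_nmgffactor} from Lemma~\ref{lem:Y_nfactor}. With $\lambda_n=\langle Y_n^2\rangle_{\Lambda_n,\beta}$ and $\gamma_n=1/(2\lambda_n)$, the numerator $\langle\exp(xY_n/\lambda_n)\rangle_{\Lambda_n,\beta}$ equals, by \eqref{eq:Y_nmgffactor} with $z=x/\lambda_n$,
\begin{equation*}
	\exp\!\left[\frac{x^2}{2\lambda_n}\right]\prod_{j\geq 1}\left[\left(1+\frac{x^2}{\lambda_n^2\alpha_{j,n}^2}\right)\exp\!\left(-\frac{x^2}{\lambda_n^2\alpha_{j,n}^2}\right)\right].
\end{equation*}
The factor $\exp[x^2/(2\lambda_n)]$ cancels exactly against the Gaussian prefactor $\exp[-x^2/(2\lambda_n)]$ in \eqref{eq:W_ndensity}; this cancellation is the whole point of the choice \eqref{eq:lambda_ndef} of $\lambda_n$. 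Hence
\begin{equation*}
	f_{W_n}(x)=\frac{1}{\sqrt{2\pi\lambda_n}\,\langle\exp(\gamma_nY_n^2)\rangle_{\Lambda_n,\beta}}\prod_{j\geq 1}\left[\left(1+\frac{x^2}{\lambda_n^2\alpha_{j,n}^2}\right)\exp\!\left(-\frac{x^2}{\lambda_n^2\alpha_{j,n}^2}\right)\right].
\end{equation*}
Now I rescale: $f_{d_nW_n}(x)=d_n^{-1}f_{W_n}(x/d_n)$, and from \eqref{eq:d_ndef} we have $1/(\lambda_n^2 d_n^2\alpha_{j,n}^2)=\big[-u_4(Y_n)/4!\big]^{1/2}\alpha_{j,n}^{-2}$. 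Writing $t_j:=\big[-u_4(Y_n)/4!\big]^{1/2}\alpha_{j,n}^{-2}\geq 0$, the product becomes $\prod_{j\geq 1}\big[(1+t_jx^2)e^{-t_jx^2}\big]$, and the prefactor $d_n^{-1}(2\pi\lambda_n)^{-1/2}\langle\exp(\gamma_nY_n^2)\rangle^{-1}$ is exactly $c_n$ as defined in \eqref{eq:c_ndef}.

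The remaining task is purely analytic: bound $g(x):=\prod_{j\geq 1}\big[(1+t_jx^2)e^{-t_jx^2}\big]$ from above by $1/(1+x^4/3)$ and from below by $e^{-x^4}$, where the $t_j$ satisfy the key constraint $\sum_{j\geq 1}t_j=\tfrac12$. This constraint comes from \eqref{eq:cumdef} with $m=2$: $u_4(Y_n)=-\tfrac{4!}{2}\sum_j\alpha_{j,n}^{-4}$, so $\sum_j\alpha_{j,n}^{-4}=-u_4(Y_n)/12$, hence $\sum_j t_j^2=\big[-u_4(Y_n)/4!\big]\sum_j\alpha_{j,n}^{-4}=\big[-u_4(Y_n)/4!\big]^2\cdot\big(\text{something}\big)$ — I must be slightly careful here. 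Actually $\sum_j t_j^2=\big[-u_4(Y_n)/4!\big]\sum_j\alpha_{j,n}^{-4}$; but I want $\sum_j t_j$, not $\sum_j t_j^2$. Let me instead use $\sum_j\alpha_{j,n}^{-4}$ directly: the correct normalization is that $g$ has Taylor expansion $g(x)=1-\big(\sum_j t_j^2\big)x^4+O(x^6)$ since $\log(1+u)-u=-u^2/2+\cdots$, so $\log g(x)=-\tfrac12\big(\sum_j t_j^2\big)x^4+\cdots$, and the $x^4$ coefficient being exactly $1$ forces $\sum_j t_j^2=2$. Indeed $\sum_j t_j^2=\big[-u_4(Y_n)/4!\big]\sum_j\alpha_{j,n}^{-4}=\big[-u_4(Y_n)/4!\big]\cdot\big[-u_4(Y_n)/(4!/2)\big]/\big(\cdots\big)$; cleaning up via \eqref{eq:cumdef}, $\sum_j\alpha_{j,n}^{-4}=\tfrac{2}{4!}\big(-u_4(Y_n)\big)\big/\big(-u_4(Y_n)\big)\cdot\big[-u_4(Y_n)/4!\big]^{?}$ — the clean statement, which I will verify, is $\sum_j t_j^2=2$. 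So the analytic lemma to prove is: \emph{for any nonnegative $(t_j)$ with $\sum_j t_j^2=2$, one has $e^{-x^4}\leq\prod_j(1+t_jx^2)e^{-t_jx^2}\leq 1/(1+x^4/3)$ for all real $x$.}

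For the upper bound, use $(1+u)e^{-u}\leq e^{-u^2/2+u^3/3}$? No — a cleaner route: $(1+u)e^{-u}\leq 1$ for $u\geq 0$ (so $g\leq 1$ trivially), and more sharply $(1+u)e^{-u}\leq 1/(1+u^2/3)$ can be checked by a one-variable calculus argument (equivalently $(1+u)(1+u^2/3)\leq e^u$), whence $g(x)\leq\prod_j 1/(1+t_j^2x^4/3)\leq 1/(1+(\sum_j t_j^2)x^4/3)=1/(1+2x^4/3)\leq 1/(1+x^4/3)$, using $\prod(1+a_j)\geq 1+\sum a_j$. For the lower bound, use $(1+u)e^{-u}\geq e^{-u^2/2}$ for $u\geq 0$ — equivalently $\log(1+u)\geq u-u^2/2$, which is the standard inequality valid for $u\geq 0$ — giving $g(x)\geq\prod_j e^{-t_j^2x^4/2}=e^{-(\sum_j t_j^2)x^4/2}=e^{-x^4}$. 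Then \eqref{eq:c_nbd} follows immediately: integrating \eqref{eq:d_nW_ndesitybd} over $\mathbb{R}$ and using $\int f_{d_nW_n}=1$ gives $c_n\int e^{-x^4}dx\leq 1\leq c_n\int(1+x^4/3)^{-1}dx$. Tightness of $\{d_nW_n\}$ is then a direct consequence of the uniform upper bound $f_{d_nW_n}(x)\leq c_n/(1+x^4/3)$ together with the uniform upper bound on $c_n$. The main obstacle is simply pinning down the constant $\sum_j t_j^2=2$ from \eqref{eq:cumdef} without sign or factor errors, and verifying the two elementary inequalities $(1+u)(1+u^2/3)\leq e^u$ and $\log(1+u)\geq u-u^2/2$ for $u\geq 0$; both are routine single-variable calculus (check the value and derivatives at $u=0$, then monotonicity of the difference).
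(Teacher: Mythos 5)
Your proof follows essentially the same route as the paper: write out $f_{d_nW_n}$ explicitly via the factorization of Lemma~\ref{lem:Y_nfactor} (with the Gaussian prefactor cancelling by the choice of $\lambda_n$), observe that $\sum_j t_j^2 = 2$ by \eqref{eq:cumdef} at $m=2$, and then sandwich each factor $(1+t_jx^2)e^{-t_jx^2}$ between elementary bounds, combining the products with $\prod(1+a_j)\geq 1+\sum a_j$. The only differences are cosmetic: the paper uses the slightly weaker per-factor bound $(1+y)e^{-y}\leq 1/(1+y^2/6)$ while you use $1/(1+y^2/3)$ (both are valid, and both yield \eqref{eq:d_nW_ndesitybd}), and you have a transcription slip in the exponent of $t_j$ --- it should be $t_j = \bigl[-u_4(Y_n)/4!\bigr]^{-1/2}\alpha_{j,n}^{-2}$, not $\bigl[-u_4(Y_n)/4!\bigr]^{1/2}\alpha_{j,n}^{-2}$, since $\lambda_n^2 d_n^2 = \bigl[-u_4(Y_n)/4!\bigr]^{1/2}$; fortunately you land on the correct value $\sum_j t_j^2 = 2$ anyway, so the rest of the argument is unaffected.
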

\begin{proof}
A direct computation using \eqref{eq:W_ndensity}, Lemma \ref{lem:Y_nfactor}, \eqref{eq:c_ndef} and \eqref{eq:d_ndef} gives that
	\begin{equation}\label{eq:d_nW_ndensity}
		f_{d_nW_n}(x)=\frac{1}{d_n}f_{W_n}\left(\frac{x}{d_n}\right)=c_n\prod_{j\geq 1}\left[1+\frac{x^2}{\alpha_{j,n}^2[-u_4(Y_n)/4!]^{1/2}}\right]\exp\left[-\frac{x^2}{\alpha_{j,n}^2[-u_4(Y_n)/4!]^{1/2}}\right].
	\end{equation}
Now \eqref{eq:d_nW_ndesitybd} follows by applying the two inequalities below to the exponential term in \eqref{eq:d_nW_ndensity}
\begin{equation}\label{eq:twoine}
	\exp(-y^2/2)\leq(1+y)\exp(-y)\leq\frac{1}{1+y^2/6} \text{ for any }y\geq0.
\end{equation}
Clearly, \eqref{eq:d_nW_ndesitybd} implies that $\{d_nW_n:n\in\mathbb{N}\}$ is tight and \eqref{eq:c_nbd}.
\end{proof}

The following lemma contains some important properties of $X_n$ and $Y_n$.
\begin{lemma}\label{lem:X_nY_n}
One has
	\begin{equation}\label{eq:Y_nCLT}
		\frac{Y_n}{\sqrt{\langle Y_n^2 \rangle_{\Lambda_n,\beta}}}\Longrightarrow N(0,1),
	\end{equation}
	where $N(0,1)$ is a standard normal random variable in the following two situations: (i) any $d\geq 1$ and any $\beta\in[0,\beta_c(d))$, (ii) any $d\geq 4$ and $\beta=\beta_c(d)$. Furthermore, in those two situations, one has
	\begin{equation}\label{eq:Y_nmgf}
		\left|\langle \exp(zY_n)\rangle_{\Lambda_n,\beta}\right|\leq \exp\left[|z|^2\langle Y_n^2 \rangle_{\Lambda_n,\beta}/2\right],~\forall z\in\mathbb{C};
	\end{equation}
	\begin{equation}\label{eq:Y_noverX_n}
		\lim_{n\rightarrow\infty}\frac{\langle Y_n^2 \rangle_{\Lambda_n,\beta}}{\mathbb{E}_{\Lambda_n,\beta}(X_n^2)}=0.
	\end{equation}
\end{lemma}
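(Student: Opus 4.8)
The statement bundles three essentially separate facts; I would prove them in the order \eqref{eq:Y_nmgf}, \eqref{eq:Y_nCLT}, \eqref{eq:Y_noverX_n}, since the second and third both hinge on the single scalar $\rho_n:=-u_4(Y_n)/\langle Y_n^2\rangle_{\Lambda_n,\beta}^2\in[0,3]$. For \eqref{eq:Y_nmgf} nothing beyond the Lee--Yang factorization is needed, and it holds for all $d\ge1$, $\beta\ge0$: by Theorem \ref{thm:new75} applied to the (symmetric, hence mean-zero) $Y_n$ together with \eqref{eq:bdef} one may write $\langle\exp(zY_n)\rangle_{\Lambda_n,\beta}=\exp(b_nz^2)\prod_{j\ge1}(1+z^2/\alpha_{j,n}^2)$ with $b_n\ge0$ and $b_n+\sum_{j\ge1}\alpha_{j,n}^{-2}=\langle Y_n^2\rangle_{\Lambda_n,\beta}/2$; taking moduli and using $\mathrm{Re}(z^2)\le|z|^2$ and $|1+w|\le e^{|w|}$ gives $|\langle\exp(zY_n)\rangle_{\Lambda_n,\beta}|\le\exp(b_n|z|^2)\prod_{j}\exp(|z|^2\alpha_{j,n}^{-2})=\exp(|z|^2\langle Y_n^2\rangle_{\Lambda_n,\beta}/2)$, which is \eqref{eq:Y_nmgf}.

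The core of the proof is the claim $\rho_n\to0$. For $\beta<\beta_c(d)$ this is elementary: when the susceptibility is finite the Ursell four-point function is summable (e.g.\ by the tree-graph bound), so $-u_4(Y_n)=O(|\Lambda_n|)$ while $\langle Y_n^2\rangle_{\Lambda_n,\beta}\asymp|\Lambda_n|$, whence $\rho_n=O(|\Lambda_n|^{-1})$. For $d\ge4$ and $\beta=\beta_c(d)$ this is precisely the vanishing of the renormalized four-point coupling of the critical Ising model, i.e.\ the main conclusion of \cite{Aiz82,Fro82,ADC21}. Given $\rho_n\to0$, I would deduce \eqref{eq:Y_nCLT} uniformly in both regimes from the cumulant formula \eqref{eq:cumdef}: writing $Z_n:=Y_n/\sqrt{\langle Y_n^2\rangle_{\Lambda_n,\beta}}$, Lemma \ref{lem:Y_nfactor} shows its Lee--Yang zeros are $\pm i\alpha_{j,n}\sqrt{\langle Y_n^2\rangle_{\Lambda_n,\beta}}$, so \eqref{eq:cumdef} gives $|u_{2m}(Z_n)|=\tfrac{(2m)!}{m}\sum_j t_{j,n}^{m}$ with $t_{j,n}:=(\alpha_{j,n}^2\langle Y_n^2\rangle_{\Lambda_n,\beta})^{-1}\ge0$ and $\sum_j t_{j,n}^2=\rho_n/12\to0$; since $\sum_j t_{j,n}^m\le(\sum_j t_{j,n}^2)^{m/2}$ for $m\ge2$, every $u_{2m}(Z_n)\to0$ for $m\ge2$, while $u_2(Z_n)=1$ and all odd cumulants vanish by symmetry. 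Hence all cumulants, and therefore (by the method of moments) all moments, of $Z_n$ converge to those of $N(0,1)$, and the normal law being determined by its moments gives $Z_n\Rightarrow N(0,1)$. (Alternatively one may simply cite the classical subcritical magnetization CLT when $\beta<\beta_c(d)$; and note that \eqref{eq:Y_nmgf} already yields $|\mathbb{E}\exp(zZ_n)|\le e^{|z|^2/2}$ uniformly in $n$, so by Cauchy's estimates all moments of $Z_n$ are bounded uniformly in $n$, which both reproves $\rho_n\to0$ and gives uniform integrability of $\{Z_n^4\}$.)

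For \eqref{eq:Y_noverX_n}: the law of $X_n$ is $\propto\exp(\gamma_nx^2)\,dF_{Y_n}(x)$ with $Y_n$ symmetric, so $X_n\overset{d}{=}-X_n$ and $\mathbb{E}_{\Lambda_n,\beta}(X_n)=0$; since the $N(0,1)$ in \eqref{eq:Wndef} is independent of $X_n$ and $\lambda_n=\langle Y_n^2\rangle_{\Lambda_n,\beta}$ by \eqref{eq:lambda_ndef}, we get $\mathbb{E}(W_n^2)=\mathbb{E}_{\Lambda_n,\beta}(X_n^2)+\langle Y_n^2\rangle_{\Lambda_n,\beta}$. From the scaling $f_{d_nW_n}(x)=d_n^{-1}f_{W_n}(x/d_n)$ and the lower bounds in \eqref{eq:d_nW_ndesitybd}--\eqref{eq:c_nbd}, $\mathbb{E}(W_n^2)=d_n^{-2}\,\mathbb{E}[(d_nW_n)^2]\ge d_n^{-2}\,c_*\!\int_{-\infty}^{\infty}x^2e^{-x^4}\,dx$ with $c_*:=\big[\int_{-\infty}^{\infty}(1+x^4/3)^{-1}\,dx\big]^{-1}>0$, while \eqref{eq:d_ndef} gives $d_n^2\langle Y_n^2\rangle_{\Lambda_n,\beta}=(4!)^{-1/2}\rho_n^{1/2}\to0$ by the previous step. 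Therefore $\mathbb{E}_{\Lambda_n,\beta}(X_n^2)/\langle Y_n^2\rangle_{\Lambda_n,\beta}=\mathbb{E}(W_n^2)/\langle Y_n^2\rangle_{\Lambda_n,\beta}-1\ge c_*\big(\int x^2e^{-x^4}dx\big)\big/\big(d_n^2\langle Y_n^2\rangle_{\Lambda_n,\beta}\big)-1\to\infty$, which is \eqref{eq:Y_noverX_n}.

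The only genuine difficulty is the input $\rho_n\to0$ at the critical point: for $d\ge4$, $\beta=\beta_c(d)$ it rests entirely on the triviality theorems \cite{Aiz82,Fro82,ADC21}, and the one point requiring care is matching the normalization there to $-u_4(Y_n)=o(\langle Y_n^2\rangle_{\Lambda_n,\beta}^2)$ for the free-boundary box magnetization. Everything else — the genus-one factorization, the $\ell^2\subset\ell^{2m}$ propagation of cumulants, the moment-method CLT, and the second-moment bound for $W_n$ via Lemma \ref{lem:d_nW_n} — is routine.
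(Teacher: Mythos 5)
Your proposal is correct, and all three pieces land, but you take visibly different routes from the paper on two of them. For \eqref{eq:Y_nmgf} both you and the paper read it off from the Lee--Yang factorization of Lemma~\ref{lem:Y_nfactor}. For \eqref{eq:Y_nCLT}, the paper works directly with the moment generating function: it sandwiches $\langle\exp(xY_n/\sqrt{\lambda_n})\rangle_{\Lambda_n,\beta}$ between $e^{x^2/2}\exp[x^4u_4(Y_n)/(24\lambda_n^2)]$ and $e^{x^2/2}/\left(1-x^4u_4(Y_n)/(72\lambda_n^2)\right)$ using the elementary inequalities $e^{-y^2/2}\le(1+y)e^{-y}\le(1+y^2/6)^{-1}$ applied to the genus-one product, so that $u_4(Y_n)/\lambda_n^2\to0$ immediately forces the MGF to converge pointwise to $e^{x^2/2}$. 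You instead propagate $\sum_jt_{j,n}^2\to0$ through all higher even cumulants via $\ell^2\hookrightarrow\ell^m$ and invoke the method of moments. Both are sound (and both rest on the same input $\rho_n\to0$, with the same subcritical ingredients --- finite susceptibility giving $-u_4(Y_n)=O(|\Lambda_n|)$ and the trivial lower bound $\lambda_n\gtrsim|\Lambda_n|$ --- and the same appeal to the triviality theorems at criticality), but the paper's two-sided MGF bound is marginally more direct since it controls the characteristic function in one stroke rather than moment by moment. For \eqref{eq:Y_noverX_n} the arguments are genuinely different: the paper rewrites the ratio as $\langle\exp(V_n/2)\rangle_{\Lambda_n,\beta}/\langle V_n\exp(V_n/2)\rangle_{\Lambda_n,\beta}$ with $V_n=Y_n^2/\langle Y_n^2\rangle_{\Lambda_n,\beta}$, shows $\langle\exp(V_n/2)\rangle_{\Lambda_n,\beta}\to\infty$ from \eqref{eq:Y_nCLT} and \eqref{eq:Y_nmgf}, and then truncates at a level $M$; you instead exploit the density bound of Lemma~\ref{lem:d_nW_n} to get $\mathbb{E}(W_n^2)\gtrsim d_n^{-2}$, combine it with $\mathbb{E}(W_n^2)=\mathbb{E}_{\Lambda_n,\beta}(X_n^2)+\langle Y_n^2\rangle_{\Lambda_n,\beta}$, and use $d_n^2\langle Y_n^2\rangle_{\Lambda_n,\beta}=(4!)^{-1/2}\rho_n^{1/2}\to0$. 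Your version is shorter and entirely avoids the truncation; it is also legitimate, since Lemma~\ref{lem:d_nW_n} precedes Lemma~\ref{lem:X_nY_n} and its proof does not use it, so there is no circularity. One small caveat: your parenthetical alternative in the CLT paragraph, which re-derives $\rho_n\to0$ from the CLT via moment convergence, runs in the wrong logical direction (it uses the CLT to recover the hypothesis that was needed to prove it), so it should be dropped or rephrased; the paper is also explicit that the free-boundary-box $Y_n$ is not the magnetization of a translation-invariant system, so blanket citation of ``the classical subcritical magnetization CLT'' needs the extra word of care the paper supplies.
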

\begin{proof}
	For $\beta<\beta_c(d)$, if $Y_n$ were from a translation invariant system (see the second part of Remark \ref{rem:Y_ngen}), then \eqref{eq:Y_nCLT} would be a standard result (see, e.g., \cite{New80}). For our current $Y_n$, we note that Lemma~\ref{lem:Y_nfactor} and \eqref{eq:twoine} imply that for any $x\in\mathbb{R}$
	\begin{equation}\label{eq:Y_nmgfbds}
		\exp(x^2/2)\exp\left[\frac{x^4 u_4(Y_n)}{24\lambda_n^2}\right]\leq\langle\exp[xY_n/\sqrt{\lambda_n}]\rangle_{\Lambda_n,\beta}\leq \exp(x^2/2)\frac{1}{1-x^4u_4(Y_n)/(72\lambda_n^2)}.
	\end{equation}
So \eqref{eq:Y_nCLT} follows if one can show
\begin{equation}\label{eq:u_4overvar}
	\lim_{n\rightarrow\infty}\frac{u_4(Y_n)}{\lambda_n^2}=0;
\end{equation}
This follows from two observations. On the one hand, Aizenman's inequality for finite systems (Proposition 5.3 of \cite{Aiz82}) and exponential decay of two-point functions when $\beta<\beta_c(d)$ (see \cite{ABF87}) imply that $u_4(Y_n)$ is at most of order $|\Lambda_n|$. On the other hand, the trivial exponential lower bound obtained using the Edwards-Sokal coupling (see, e.g., Theorems 1.16 and 3.1 of \cite{Gri06}),
	\begin{equation}
		\langle \sigma_x\sigma_y\rangle_{\Lambda_n,\beta}\geq\left(\frac{1-\exp(-2\beta)}{1+\exp(-2\beta)}\right)^{\|x-y\|_1},~\forall x,y\in\Lambda_n,
	\end{equation}
implies that $\lambda_n$ is at least of order $|\Lambda_n|$. For $d\geq 4$ and $\beta=\beta_c(d)$, \eqref{eq:Y_nCLT} follows from \cite{Aiz82,Fro82,ADC21}. The inequality \eqref{eq:Y_nmgf} follows from \eqref{eq:Y_nmgffactor}. For \eqref{eq:Y_noverX_n}, we have
	\begin{equation}\label{eq:Y_noverX_n1}
		\frac{\langle Y_n^2 \rangle_{\Lambda_n,\beta}}{\mathbb{E}_{\Lambda_n,\beta}(X_n^2)}=\frac{\langle Y_n^2 \rangle_{\Lambda_n,\beta}\left\langle\exp[Y_n^2/(2\langle Y_n^2 \rangle_{\Lambda_n,\beta})]\right\rangle_{\Lambda_n,\beta}}{\left\langle Y_n^2\exp[Y_n^2/(2\langle Y_n^2 \rangle_{\Lambda_n,\beta})]\right\rangle_{\Lambda_n,\beta}}:=\frac{\langle\exp(V_n/2)\rangle_{\Lambda_n,\beta}}{\langle V_n\exp(V_n/2)\rangle_{\Lambda_n,\beta}},
	\end{equation}
	where $V_n:=Y_n^2/\langle Y_n^2 \rangle_{\Lambda_n,\beta}$. \eqref{eq:Y_nCLT} and \eqref{eq:Y_nmgf} imply (using the convergence of moments and the inequality $\exp(x)\geq \sum_{k=1}^M x^k/k!$ for any $x\geq 0$ and $M\in\mathbb{N}$) that
	\begin{equation}\label{eq:Vnmgf}
		\lim_{n\rightarrow \infty}\langle \exp(V_n/2)\rangle_{\Lambda_n,\beta}=\infty.
	\end{equation}
	For any $M>0$, we get from \eqref{eq:Y_noverX_n1} that
	\begin{equation}
		\frac{\langle Y_n^2 \rangle_{\Lambda_n,\beta}}{\mathbb{E}_{\Lambda_n,\beta}(X_n^2)}\leq \frac{\langle 1[V_n\geq M]\exp(V_n/2)\rangle_{\Lambda_n,\beta}+\langle 1[V_n< M]\exp(V_n/2)\rangle_{\Lambda_n,\beta}}{M\langle 1[V_n\geq M]\exp(V_n/2)\rangle_{\Lambda_n,\beta}},
	\end{equation}
	where $1[\cdot]$ is the indicator function. This combined with \eqref{eq:Vnmgf} implies that
	\begin{equation}
		\limsup_{n\rightarrow\infty}\frac{\langle Y_n^2 \rangle_{\Lambda_n,\beta}}{\mathbb{E}_{\Lambda_n,\beta}(X_n^2)}=0,
	\end{equation}
	which completes the proof of \eqref{eq:Y_noverX_n}.
\end{proof}

We define
\begin{equation}\label{eq:W_ndef}
	\tilde{W}_n:=\frac{W_n}{\sqrt{\mathbb{E}_{\Lambda_n,\beta}(X_n^2)}}=\frac{X_n+\sqrt{\lambda_n}N(0,1)}{\sqrt{\mathbb{E}_{\Lambda_n,\beta}(X_n^2)}}.
\end{equation}
Then by \eqref{eq:Y_noverX_n}, we have
\begin{equation}\label{eq:W_nsmlimit}
	\mathbb{E}(\tilde{W}_n^2)=\frac{\mathbb{E}_{\Lambda_n,\beta}X_n^2+\langle Y_n^2\rangle_{\Lambda_n,\beta}}{\mathbb{E}_{\Lambda_n,\beta}(X_n^2)}\rightarrow 1 \text{ as }n\rightarrow\infty.
\end{equation}
So $\{\tilde{W}_n:n\in\mathbb{N}\}$ is tight. The following lemma compares the three families of random variables: $\{d_nW_n:n\in\mathbb{N}\}$, $\{\tilde{W}_n:n\in\mathbb{N}\}$, and $\{X_n/\sqrt{\mathbb{E}_{\Lambda_n,\beta,f}(X_n^2)}:n\in\mathbb{N}\}$.
\begin{lemma}\label{lem:3sequences}
Suppose that there exists a subsequence of $\mathbb{N}$, $\{n_k:k\in\mathbb{N}\}$, such that
	\begin{equation}
		d_{n_k}W_{n_k}\Longrightarrow W \text{ as }k\rightarrow\infty \text{ for some random variable }W,
	\end{equation} 
then
\begin{equation}
	\tilde{W}_{n_k}\Longrightarrow C_7W \text{ and }X_{n_k}/\sqrt{\mathbb{E}_{\Lambda_{n_k},\beta}(X_{n_k}^2)}\Longrightarrow C_7W \text{ as }k\rightarrow\infty,
\end{equation}
where $C_7:=\sqrt{1/\mathbb{E}(W^2)}\in(0,\infty)$.
\end{lemma}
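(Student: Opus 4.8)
The plan is to reduce the whole statement to Slutsky's theorem, once the correct normalizing constant is pinned down. Set $r_n:=d_n\sqrt{\mathbb{E}_{\Lambda_n,\beta}(X_n^2)}$; since $X_n$ is a reweighting of the bounded variable $Y_n$ (hence bounded) and $-u_4(Y_n)>0$ for finite $n$, we have $r_n\in(0,\infty)$. From \eqref{eq:W_ndef} and \eqref{eq:d_ndef} one reads off
\begin{equation*}
	\tilde{W}_n=\frac{W_n}{\sqrt{\mathbb{E}_{\Lambda_n,\beta}(X_n^2)}}=\frac{d_nW_n}{r_n},\qquad \mathbb{E}\big[(d_nW_n)^2\big]=r_n^2\,\mathbb{E}(\tilde{W}_n^2).
\end{equation*}
If I can show that $\mathbb{E}[(d_{n_k}W_{n_k})^2]\to\mathbb{E}(W^2)$ with $\mathbb{E}(W^2)\in(0,\infty)$, then, using $\mathbb{E}(\tilde{W}_n^2)\to1$ from \eqref{eq:W_nsmlimit}, the displayed identity forces $r_{n_k}^2\to\mathbb{E}(W^2)$, i.e. $r_{n_k}\to1/C_7$ with $C_7=1/\sqrt{\mathbb{E}(W^2)}\in(0,\infty)$; Slutsky's theorem applied to $\tilde{W}_{n_k}=(d_{n_k}W_{n_k})/r_{n_k}$ then yields $\tilde{W}_{n_k}\Rightarrow C_7W$.

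The step $\mathbb{E}[(d_{n_k}W_{n_k})^2]\to\mathbb{E}(W^2)\in(0,\infty)$ is exactly where Lemma \ref{lem:d_nW_n} is used, and it is the only nontrivial point. The upper bound in \eqref{eq:d_nW_ndesitybd} combined with the uniform bound on $c_n$ in \eqref{eq:c_nbd} gives
\begin{equation*}
	\sup_{n}\int_{|x|>M}x^2 f_{d_nW_n}(x)\,dx\ \le\ \Big(\sup_n c_n\Big)\int_{|x|>M}\frac{x^2}{1+x^4/3}\,dx\ \xrightarrow[M\to\infty]{}\ 0,
\end{equation*}
so $\{(d_nW_n)^2:n\in\mathbb{N}\}$ is uniformly integrable and bounded in $L^1$; by the continuous mapping theorem ($x\mapsto x^2$) together with uniform integrability, $d_{n_k}W_{n_k}\Rightarrow W$ upgrades to $\mathbb{E}[(d_{n_k}W_{n_k})^2]\to\mathbb{E}(W^2)<\infty$. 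For $\mathbb{E}(W^2)>0$ I use the lower bound in \eqref{eq:d_nW_ndesitybd}: with $c_0:=[\int_{-\infty}^{\infty}(1+x^4/3)^{-1}dx]^{-1}>0$ one has $f_{d_nW_n}(x)\ge c_0\exp(-x^4)$ for all $n$, so by the portmanteau inequality for the closed interval $[1,2]$, $\mathbb{P}(W\in[1,2])\ge\limsup_k\mathbb{P}(d_{n_k}W_{n_k}\in[1,2])\ge c_0\int_1^2\exp(-x^4)\,dx>0$, whence $\mathbb{E}(W^2)\ge\mathbb{P}(W\in[1,2])>0$. (Equivalently: the uniform upper bound on the densities prevents any subsequential limit of $d_nW_n$ from degenerating to a point mass, which is also what rules out $\mathbb{E}(W^2)=0$ and so legitimizes the division by $r_{n_k}$ in the Slutsky step.)

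Finally, for the assertion about $X_{n}/\sqrt{\mathbb{E}_{\Lambda_n,\beta}(X_n^2)}$, I write, using \eqref{eq:Wndef} and \eqref{eq:lambda_ndef},
\begin{equation*}
	\tilde{W}_n=\frac{X_n}{\sqrt{\mathbb{E}_{\Lambda_n,\beta}(X_n^2)}}+\sqrt{\frac{\lambda_n}{\mathbb{E}_{\Lambda_n,\beta}(X_n^2)}}\,N(0,1),
\end{equation*}
where the coefficient of $N(0,1)$ tends to $0$ by \eqref{eq:Y_noverX_n}, so that Gaussian term converges to $0$ in probability; since $\tilde{W}_{n_k}\Rightarrow C_7W$, one more application of Slutsky's theorem gives $X_{n_k}/\sqrt{\mathbb{E}_{\Lambda_{n_k},\beta}(X_{n_k}^2)}\Rightarrow C_7W$. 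The main obstacle, as noted, is the interchange of limit and second moment together with the two-sided bound $\mathbb{E}(W^2)\in(0,\infty)$; both are delivered by the uniform density estimates of Lemma \ref{lem:d_nW_n}, and everything else is bookkeeping with Slutsky's theorem.
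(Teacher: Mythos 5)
Your proof is correct, and it takes a genuinely different route from the paper's. The paper establishes uniform integrability of $\tilde{W}_n^2$ by invoking the GHS inequality to get the moment-generating-function bound $\mathbb{E}\exp[x\tilde{W}_n]\leq\exp(x^2)$, then passes to a further subsequence $\{m_l\}$ of $\{n_k\}$ along which $\tilde{W}_{m_l}\Rightarrow C_7W$ for some $C_7\geq 0$, and only afterwards pins down $C_7$ via $\mathbb{E}(\tilde{W}_{m_l}^2)\to 1$ and rules out triviality of $W$ by appealing to Lemma~\ref{lem:d_nW_n}. You instead work entirely with the density bounds \eqref{eq:d_nW_ndesitybd}--\eqref{eq:c_nbd}: the upper bound gives uniform integrability of $(d_nW_n)^2$ directly (so $\mathbb{E}[(d_{n_k}W_{n_k})^2]\to\mathbb{E}(W^2)<\infty$), the lower bound gives $\mathbb{E}(W^2)>0$ via portmanteau, and the identity $r_n^2=\mathbb{E}[(d_nW_n)^2]/\mathbb{E}(\tilde{W}_n^2)$ together with \eqref{eq:W_nsmlimit} then yields convergence of the scaling factor $r_{n_k}\to 1/C_7\in(0,\infty)$, after which Slutsky finishes both assertions. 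Your argument thus avoids GHS entirely, avoids the detour through a further subsequence, and pins down $C_7$ in one step by identifying the deterministic limit of $r_{n_k}$; what you give up is the slightly stronger exponential-moment control that the paper's GHS step also delivers, but that extra strength is not needed here. Both approaches agree on the final Slutsky/converging-together step used to transfer the limit from $\tilde{W}_{n_k}$ to $X_{n_k}/\sqrt{\mathbb{E}_{\Lambda_{n_k},\beta}(X_{n_k}^2)}$.
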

\begin{remark}\label{rem:3sequences}
	Lemma \ref{lem:3sequences} and the proof of Theorem \ref{thm1} imply the following stronger result: if one of the three families of random variables $\{d_nW_n:n\in\mathbb{N}\}$, $\{\tilde{W}_n:n\in\mathbb{N}\}$ and $\{X_n/\sqrt{\mathbb{E}_{\Lambda_n,\beta}(X_n^2)}:n\in\mathbb{N}\}$ has a convergent (in distribution) subsequence, say along $\{n_k:k\in\mathbb{N}\}$, then the other two families also converge (in distribution) along the same subsequence with their limits equal to the former one up to a linear transformation; and all subsequential limits are not trivial (i.e., not the Dirac point measure at $0$).
\end{remark}
\begin{proof}
	$\{X_n/\sqrt{\mathbb{E}_{\Lambda_n,\beta}(X_n^2)}:n\in\mathbb{N}\}$ is tight since the second moments are uniformly bounded by $1$. The converging together lemma (see p. 108 of \cite{Dur19}) and \eqref{eq:W_ndef}, \eqref{eq:Y_noverX_n} imply that $\{\tilde{W}_n:n\in\mathbb{N}\}$ and $\{X_n/\sqrt{\mathbb{E}_{\Lambda_n,\beta}(X_n^2)}:n\in\mathbb{N}\}$ have the same convergent subsequences and same corresponding limits. By the tightness of $\{\tilde{W}_n:n\in\mathbb{N}\}$, we can find a subsequence of $\{n_k:k\in\mathbb{N}\}$, $\{m_l:l\in\mathbb{N}\}$, such that
\begin{equation}
	\tilde{W}_{m_l}\Longrightarrow C_7W \text{ for some }C_7\geq 0\text{ as }l\rightarrow\infty.
\end{equation} 
By the GHS inequality \cite{GHS70} and \eqref{eq:Y_noverX_n}, we have that for all large $n$ and any $x\in\mathbb{R}$,
\begin{equation}
	\mathbb{E}\exp[x\tilde{W}_n]=\exp\left[x^2\lambda_n/\left(2\mathbb{E}_{\Lambda_n,\beta}(X_n^2)\right)\right]\mathbb{E}_{\Lambda_n,\beta}\exp\left(xX_n/\sqrt{\mathbb{E}_{\Lambda_n,\beta}(X_n^2)}\right)\leq \exp(x^2).
\end{equation}
So we have by \eqref{eq:W_nsmlimit} that
\begin{equation}\label{eq:tildeW_nsmlimit}
	1=\lim_{l\rightarrow\infty} \mathbb{E}\left(\tilde{W}_{m_l}\right)^2=\mathbb{E}(C_7W)^2.
\end{equation}
Therefore,
\begin{equation}
	C_7:=\sqrt{1/\mathbb{E}(W^2)}
\end{equation}
and all subsequential limit of $\{\tilde{W}_{n_k}:k\geq 1\}$ are the same and thus
\begin{equation}
	\tilde{W}_{n_k}\Longrightarrow C_7W \text{ as }k\rightarrow \infty.
\end{equation}
We note that $W\neq 0$ follows from \eqref{eq:d_nW_ndesitybd}, \eqref{eq:c_nbd}.
\end{proof}

\section{Proofs of the main results}\label{sec:proofs}
\subsection{A proposition}
The following proposition is a key step towards the proof of Theorem \ref{thm1}.
\begin{proposition}\label{prop:ZLY}
	Suppose that
	\begin{equation}
	     X_n/\sqrt{\mathbb{E}_{\Lambda_n,\beta}(X_n^2)}\Longrightarrow X \text{ as }n\rightarrow\infty \text{ for some random variable }X.
	\end{equation}
For any fixed $b>0$, let $Z$ be a random variable with distribution function $F_{Z}$ given by
\begin{equation}\label{eq:RNZ_noverW}
	\frac{d F_{Z}}{d F_{X}}(x)=\frac{\exp(-bx^2)}{\mathbb{E}\exp[-bX^2]},
\end{equation}
where $F_X$ is the distribution function of $X$. Then the random variable $Z$ is of LY type.
\end{proposition}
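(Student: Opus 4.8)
The plan is to verify the three defining properties of an LY type random variable for $Z$, the third one (only pure imaginary zeros of the moment generating function) being the crux. Throughout, write $\hat X_n := X_n/\sqrt{\mathbb{E}_{\Lambda_n,\beta}(X_n^2)}$ and $s_n^2 := \mathbb{E}_{\Lambda_n,\beta}(X_n^2)$, so that $\hat X_n\Longrightarrow X$ by hypothesis.

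Properties (a) and (b) are easy and I would dispose of them first. Since the free-boundary Ising model with $h=0$ is invariant under the global spin flip, $Y_n\overset{d}{=}-Y_n$, and as $dF_{X_n}/dF_{Y_n}$ depends on $x$ only through $x^2$, also $X_n\overset{d}{=}-X_n$; hence $\hat X_n$ and its distributional limit $X$ are symmetric, and since $dF_Z/dF_X(x)\propto\exp(-bx^2)$ is an even tilt, $Z\overset{d}{=}-Z$. For (b) --- and this is exactly why one tilts --- for any $0<D<b$ one has $\mathbb{E}\exp(DZ^2)=\mathbb{E}\exp((D-b)X^2)/\mathbb{E}\exp(-bX^2)\le 1/\mathbb{E}\exp(-bX^2)<\infty$, because $\exp((D-b)X^2)\le 1$ and $\mathbb{E}\exp(-bX^2)\in(0,1]$. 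I would also record here that $g(z):=\mathbb{E}[\exp(zX)\exp(-bX^2)]$ (so that $\mathbb{E}\exp(zZ)=g(z)/g(0)$) is entire, since $|\exp(zx-bx^2)|\le\exp((\Re z)^2/(4b))$ for real $x$.

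For (c) the key observation is that, although $X$ is not itself an Ising magnetization, for all large $n$ the law of $\hat X_n$ tilted by $\exp(-bx^2)$ is still a linear rescaling of the total magnetization of a \emph{ferromagnetic} Ising model, so the Lee--Yang theorem applies. Indeed, unfolding the Radon--Nikodym derivative \eqref{eq:RN},
\begin{equation*}
	\mathbb{E}\big[\exp(z\hat X_n)\exp(-b\hat X_n^2)\big]=\frac{\langle\exp((z/s_n)Y_n)\exp((\gamma_n-b/s_n^2)Y_n^2)\rangle_{\Lambda_n,\beta}}{\langle\exp(\gamma_nY_n^2)\rangle_{\Lambda_n,\beta}},
\end{equation*}
and by \eqref{eq:Y_noverX_n} one has $\langle Y_n^2\rangle_{\Lambda_n,\beta}/s_n^2\to 0$, so $\gamma_n-b/s_n^2>0$ for all $n$ large. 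For such $n$, since $(\sum_u\sigma_u)^2=|\Lambda_n|+2\sum_{\{u,v\}}\sigma_u\sigma_v$, the numerator above equals a positive constant times the partition function of a ferromagnetic Ising model with pair coupling $\beta$ between nearest neighbors and $2(\gamma_n-b/s_n^2)\ge 0$ between every pair of sites, in external field $w=z/s_n$. Hence by the Lee--Yang theorem $w\mapsto\langle\exp(wY_n)\exp((\gamma_n-b/s_n^2)Y_n^2)\rangle_{\Lambda_n,\beta}$ has only pure imaginary zeros, and rescaling $w=z/s_n$ with $s_n>0$ shows that $g_n(z):=\mathbb{E}[\exp(z\hat X_n)\exp(-b\hat X_n^2)]$ has only pure imaginary zeros, for all large $n$.

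It remains to pass to the limit. For each fixed $z\in\mathbb{C}$, $x\mapsto\exp(zx-bx^2)$ is bounded and continuous with $|\exp(zx-bx^2)|\le\exp((\Re z)^2/(4b))$, so $\hat X_n\Longrightarrow X$ gives $g_n(z)\to g(z)$ pointwise, while the same bound makes $\{g_n\}$ locally uniformly bounded; Vitali's theorem then yields $g_n\to g$ locally uniformly on $\mathbb{C}$ (applied to the tail $n$ large, where $g_n$ has pure imaginary zeros). Since $g(0)=\mathbb{E}\exp(-bX^2)>0$, $g\not\equiv 0$, so Hurwitz's theorem (Lemma \ref{lem:Hur}) applied on $\mathbb{C}\setminus i\mathbb{R}$ shows that $g$, hence $\mathbb{E}\exp(zZ)=g(z)/g(0)$, has only pure imaginary zeros. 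Together with (a) and (b), this shows $Z$ is of LY type. I expect the only genuine obstacle to be the sign condition $\gamma_n-b/s_n^2\ge 0$ that keeps the tilted model ferromagnetic so that Lee--Yang applies --- which is precisely where \eqref{eq:Y_noverX_n} enters --- together with the routine but necessary verification that the relevant moment generating functions are locally uniformly bounded so that Vitali and Hurwitz can be invoked.
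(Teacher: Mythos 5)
Your proof is correct and follows essentially the same route as the paper: both unfold the Gaussian tilt to show that, for large $n$, the tilted law of $\hat X_n$ is a linear rescaling of the total magnetization of a ferromagnetic Ising model (yours via the identity $(\sum_u\sigma_u)^2=|\Lambda_n|+2\sum_{u<v}\sigma_u\sigma_v$, the paper via introducing $\hat\gamma_n$ and showing $\hat\gamma_n/\gamma_n\to 1$ by \eqref{eq:Y_noverX_n}), then apply the Lee--Yang theorem and pass to the limit. The only real difference is that you prove the closure of the Lee--Yang property under weak limits by hand (Vitali plus the classical Hurwitz theorem --- note Lemma~\ref{lem:Hur} as stated is not quite the form you need, since it presupposes the limit has pure imaginary zeros; the standard Hurwitz contradiction argument on $\mathbb{C}\setminus i\mathbb{R}$ is what you actually use), whereas the paper cites Theorem~7 of \cite{NW19} for this step.
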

\begin{proof}
	To simply the notation, we denote
	\begin{equation}
		\tilde{X}_n:=X_n/\sqrt{\mathbb{E}_{\Lambda_n,\beta}(X_n^2)}.
	\end{equation}
Define a sequence of random variable $Z_n$ (or more precisely, a sequence of distribution functions $F_{Z_n}$) by
\begin{equation}\label{eq:RNZ_noverX_n}
	\frac{d F_{Z_n}}{d F_{\tilde{X}_n}}(x)=\frac{\exp(-bx^2)}{\mathbb{E}_{\Lambda_n,\beta}\exp[-b\tilde{X}_n^2]}.
\end{equation}
Then for any continuous and bounded function $g$, by the continuous mapping theorem (Theorem 3.2.10 of \cite{Dur19}), we have
\begin{equation}
	\mathbb{E}g(Z_n)=\frac{\mathbb{E}_{\Lambda_n,\beta}\left[g(\tilde{X}_n)\exp[-b\tilde{X}_n^2]\right]}{\mathbb{E}_{\Lambda_n,\beta}\exp[-b\tilde{X}_n^2]}\rightarrow \frac{\mathbb{E}\left[g(X)\exp[-bX^2]\right]}{\mathbb{E}\exp[-bX^2]} \text{ as }n\rightarrow\infty.
\end{equation}
Therefore,
\begin{equation}\label{eq:Z_ntoZ}
	Z_n\Longrightarrow Z \text{ as }n\rightarrow\infty.
\end{equation}
Equation \eqref{eq:RN} implies that
\begin{equation}\label{eq:RNtildeX_noverY_n}
	d F_{\tilde{X}_n}(x)=d F_{X_n}\left(x\sqrt{\mathbb{E}_{\Lambda_n,\beta}(X_n^2)}\right)=\frac{\exp[\gamma_n x^2\mathbb{E}_{\Lambda_n,\beta}(X_n^2)]}{\langle\exp[\gamma_n Y_n^2]\rangle_{\Lambda_n,\beta}}d F_{Y_n}\left(x\sqrt{\mathbb{E}_{\Lambda_n,\beta}(X_n^2)}\right).
\end{equation}
Combining \eqref{eq:RNZ_noverX_n} and \eqref{eq:RNtildeX_noverY_n}, we get
\begin{equation}\label{eq:RNZ_noverY_n}
	d F_{Z_n}(x)=\frac{\exp[\hat{\gamma}_n x^2\mathbb{E}_{\Lambda_n,\beta}(X_n^2)]}{\mathbb{E}_{\Lambda_n,\beta}\exp[-b\tilde{X}_n^2]\langle\exp[\gamma_n Y_n^2]\rangle_{\Lambda_n,\beta}}d F_{Y_n}\left(x\sqrt{\mathbb{E}_{\Lambda_n,\beta}(X_n^2)}\right),
\end{equation}
where 
\begin{equation}\label{eq:hatgammadef}
	\hat{\gamma}_n:=\gamma_n\left(1-\frac{b}{\gamma_n\mathbb{E}_{\Lambda_n,\beta}(X_n^2)}\right).
\end{equation}
Let $\hat{X}_n$ be the total magnetization for the perturbed Ising model with Hamiltonian
\begin{equation}\label{eq:hatH}
	\hat{H}_{\Lambda_n,\beta}(\sigma):=-\beta\sum_{\{u,v\}}\sigma_u\sigma_y-\hat{\gamma}_nY^2_n(\sigma),\qquad \sigma\in\{-1,+1\}^{\Lambda_n}.
\end{equation}
Then
\begin{equation}
	\frac{d F_{\hat{X}_n}}{d F_{Y_n}}(x)=\frac{\exp[\hat{\gamma}_n x^2]}{\langle \exp[\hat{\gamma}_n Y_n^2]\rangle_{\Lambda_n,\beta}}.
\end{equation}
Hence
\begin{equation}\label{eq:RNhatX_noverY_n}
	d F_{\hat{X}_n}\left(x\sqrt{\mathbb{E}_{\Lambda_n,\beta}(X_n^2)}\right)=\frac{\exp[\hat{\gamma}_n x^2\mathbb{E}_{\Lambda_n,\beta}(X_n^2)]}{\langle\exp[\hat{\gamma}_n Y_n^2]\rangle_{\Lambda_n,\beta}}d F_{Y_n}\left(x\sqrt{\mathbb{E}_{\Lambda_n,\beta}(X_n^2)}\right)
\end{equation}
Equations \eqref{eq:RNZ_noverY_n} and \eqref{eq:RNhatX_noverY_n} imply that
\begin{equation}\label{eq:Z_n=hatX_n}
	Z_n\overset{d}{=}\frac{\hat{X}_n}{\sqrt{\mathbb{E}_{\Lambda_n,\beta}(X_n^2)}},
\end{equation}
(where $\overset{d}{=}$ denotes equal in distribution) and
\begin{equation}
	\mathbb{E}_{\Lambda_n,\beta}\exp[-b\tilde{X}_n^2]\langle\exp[\gamma_n Y_n^2]\rangle_{\Lambda_n,\beta}=\langle\exp[\hat{\gamma}_n Y_n^2]\rangle_{\Lambda_n,\beta}.
\end{equation}
From the definition of $\hat{\gamma}_n$ in \eqref{eq:hatgammadef}, $\gamma_n$ in \eqref{eq:gamma_ndef}, and \eqref{eq:Y_noverX_n}, we know that
\begin{equation}
	\lim_{n\rightarrow\infty}\frac{\hat{\gamma}_n}{\gamma_n}=1.
\end{equation}
So the interactions in \eqref{eq:hatH} are ferromagnetic for all large $n$. Therefore, by the Lee-Yang theorem \cite{LY52}, the moment generating function of $\hat{X}_n$ has only pure imaginary zeros. So \eqref{eq:Z_n=hatX_n} says that each $Z_n$ is of LY type, and thus $Z$ is of LY type by \eqref{eq:Z_ntoZ} and Theorem 7 of~\cite{NW19}.
\end{proof}

\subsection{Proof of Theorem \ref{thm1}}
For ease of reference, we state without proof the following lemma since the proof follows directly from Hurwitz's theorem (see, e.g., p. 4 of \cite{Mar66}).
\begin{lemma}\label{lem:Hur}
	Let $f_n$ be a sequence of analytic functions on $\mathbb{C}$ such that all zeros (listed according to their multiplicities) of $f_n$ are $\{\pm i\alpha_{j,n}:j\geq 1\}$ with $0<\alpha_{1,n}\leq\alpha_{2,n}\leq\dots$. Suppose that $f_n$ converge uniformly on compact subsets of $\mathbb{C}$ to an analytic function $f$, and all zeros (listed according to their multiplicities) of $f$ are $\{\pm ia_j:j\geq 1\}$ with $0<a_1\leq a_2\leq\dots$. Then we have
	\[\lim_{n\rightarrow\infty}\alpha_{j,n}=a_j \text{ for each }j\geq 1.\]
\end{lemma}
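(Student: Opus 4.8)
The plan is to reduce the statement to Hurwitz's theorem, the only point requiring genuine care being a compactness argument controlling \emph{all} zeros of $f_n$ in a fixed disk, rather than just those near one prescribed zero of $f$. The conclusion is vacuous when $f$ has no zeros, and in the intended applications $f(0)=1$, so we may and do assume $f\not\equiv0$; then the zeros $\{\pm ia_j:j\geq1\}$ of $f$ are isolated. Write $b_1<b_2<\cdots$ for the distinct values among the $a_j$ and let $m_k\geq1$ be the multiplicity of $ib_k$ (equivalently of $-ib_k$) as a zero of $f$, so that $b_k=a_\ell$ precisely for $\ell$ in the block $\{m_1+\cdots+m_{k-1}+1,\dots,m_1+\cdots+m_k\}$.

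First I would fix an arbitrary $R>0$ with $R\notin\{a_j:j\geq1\}$ --- harmless, since any fixed index $\ell$ is captured by choosing such an $R>a_\ell$ --- and then a small $\delta>0$ for which the closed disks $\bar D(\pm ib_k,\delta)$, over the finitely many $k$ with $b_k<R$, are pairwise disjoint, avoid $0$, and contain no zero of $f$ other than their center. On the compact set $K:=\bar D(0,R)\setminus\bigcup_{k:\,b_k<R}\bigl(D(ib_k,\delta)\cup D(-ib_k,\delta)\bigr)$ the analytic function $f$ has no zero, so $\inf_K|f|=:c>0$; uniform convergence on compacts then gives $\inf_K|f_n|\geq c/2>0$ for all large $n$, whence for such $n$ every zero of $f_n$ in $\bar D(0,R)$ lies in one of the little disks. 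On the other hand, Hurwitz's theorem applied on each circle $\partial D(\pm ib_k,\delta)$, where $f$ is nonvanishing, shows that for all large $n$ the function $f_n$ has exactly $m_k$ zeros (with multiplicity) inside $D(ib_k,\delta)$ and exactly $m_k$ inside $D(-ib_k,\delta)$.

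Next I would do the bookkeeping forced by the ordering $0<\alpha_{1,n}\leq\alpha_{2,n}\leq\cdots$. Since every zero of $f_n$ is pure imaginary, the two facts above say that, for all large $n$, the set $\{j:\alpha_{j,n}\leq R\}$ has exactly $L:=\sum_{k:\,b_k<R}m_k$ elements (the number of $\ell$ with $a_\ell<R$), and these $\alpha_{j,n}$ fall into the little intervals in order: $m_1$ of them in $(b_1-\delta,b_1+\delta)$, then $m_2$ in $(b_2-\delta,b_2+\delta)$, and so on, the blocks staying separated because $b_k+\delta<b_{k+1}-\delta$. Comparing this with the monotone list $\alpha_{1,n}\leq\cdots\leq\alpha_{L,n}$ forces $|\alpha_{\ell,n}-a_\ell|<\delta$ for every $\ell$ with $a_\ell<R$. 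Letting $\delta\to0$ with $R$ fixed, and then $R\to\infty$, gives $\lim_{n\to\infty}\alpha_{\ell,n}=a_\ell$ for every $\ell\geq1$.

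The step I expect to be the main obstacle is the compactness argument of the second paragraph: it is precisely what excludes spurious zeros of $f_n$ materializing inside $\bar D(0,R)$ away from the zeros of $f$ --- in particular zeros drifting toward $0$ or leaking in from near $|z|=R$ --- and only once that is in hand does Hurwitz pin down the exact local multiplicities, after which the monotone labeling of the $\alpha_{\ell,n}$ is automatic. Everything else is routine; note also that in the paper's applications $f_n$ and $f$ are (limits of) normalized moment generating functions with $f_n(0)=f(0)=1$, so $f\not\equiv0$ holds and the hypotheses of the lemma are genuinely satisfied.
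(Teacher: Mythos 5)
Your proof is correct, and it is exactly the Hurwitz-theorem argument the paper alludes to (the paper gives no details, only the citation to Marden). The compactness step ruling out spurious zeros of $f_n$ in $\bar D(0,R)$ away from the zeros of $f$, combined with the local zero counts from Hurwitz and the monotone labeling of the $\alpha_{j,n}$, is precisely the right bookkeeping.
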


We are ready to prove Theorem \ref{thm1}.
\begin{proof}[Proof of Theorem \ref{thm1}]
	By the assumption of the theorem, we have
	\begin{equation}
		X_{n_k}/\sqrt{\mathbb{E}_{\Lambda_{n_k},\beta}(X_{n_k}^2)}\Longrightarrow X \text{ as }k\rightarrow\infty.
	\end{equation}
The converging together lemma (see p. 108 of \cite{Dur19}) and \eqref{eq:W_ndef}, \eqref{eq:Y_noverX_n}, together with \eqref{eq:lambda_ndef}, imply that $\{\tilde{W}_n:n\in\mathbb{N}\}$ and $\{X_n/\sqrt{\mathbb{E}_{\Lambda_n,\beta}(X_n^2)}:n\in\mathbb{N}\}$ have the same convergent subsequences and same corresponding limits.  So we have
\begin{equation}\label{eq:tildeW_n_klimit}
	\tilde{W}_{n_k}\Longrightarrow X \text{ as }k\rightarrow\infty.
\end{equation}
Lemma \ref{lem:d_nW_n} implies tightness and non-triviality of any subsequential limit; so there is a subsequence of $\{n_k:k\in\mathbb{N}\}$, $\{m_l:l\in\mathbb{N}\}$, and a random variable $W$ such that
\begin{equation}\label{eq:W_m_llimit}
	d_{m_l}W_{m_l}\Longrightarrow W:=C_8X \text{ as }l\rightarrow\infty \text{ for some } C_8\in(0,\infty).
\end{equation}
Let $F_W$ be the distribution function of $W$. Proposition \ref{prop:ZLY} implies that Theorem 2 of \cite{New76}  can be applied to $W$ and so that  either
\begin{equation}\label{eq:F_Wdelta}
	d F_W(x)=\frac{1}{2}\delta_{-x_0}+\frac{1}{2}\delta_{x_0} \text{ for some }x_0\in\mathbb{R},
\end{equation}
or $F_W$ has a density of the form
\begin{equation}\label{eq:F_Wdensity}
	f_W(x)=Kx^{2m}\exp[-\kappa_1x^4-\kappa_2x^2]\prod_{j\geq 1}\left[\left(1+\frac{x^2}{a_j^2}\right)\exp\left(-\frac{x^2}{a_j^2}\right)\right],
\end{equation}
where $K>0$, $m=0,1,2,\dots$, $0<a_1\leq a_2\leq\dots$, $\sum_{j\geq 1}1/a_j^4<\infty$, $\kappa_1>0$ and $\kappa_2\in\mathbb{R}$ (or $\kappa_1=0$ and $\kappa_2+\sum_{j\geq 1}1/a_j^2>0$); $\{a_j:j\geq 1\}$ may be empty, finite or infinite and the condition $\kappa_2+\sum_{j\geq 1}1/a_j^2>0$ is considered to be satisfied if $\sum_{j\geq 1}1/a_j^2=\infty$.

From \eqref{eq:d_nW_ndensity}, we know that for each fixed $n\in\mathbb{N}$,
\begin{equation}
	f_{d_nW_n}(x) \text{ is decreasing on }[0,\infty)
\end{equation}
since $(1+y)\exp(-y)$ is so. Hence,
\begin{equation}\label{eq:Wdistcomp}
	\mathbb{P}(W\in[0,a])=\lim_{l\rightarrow\infty}\mathbb{P}(d_{m_l}W_{m_l}\in[0,a])\geq\lim_{l\rightarrow\infty}\mathbb{P}(d_{m_l}W_{m_l}\in[a,2a])=	\mathbb{P}(W\in[a,2a])
\end{equation}
for any $a>0$ and $a,2a$ are points of continuity of $F_W$. Note that $W\neq 0$ (see Lemmas \ref{lem:d_nW_n} and \ref{lem:3sequences}, and Remark \ref{rem:3sequences}). So $F_W$ can not be of the form~\eqref{eq:F_Wdelta} because \eqref{eq:F_Wdelta} would not satisfy \eqref{eq:Wdistcomp} for all $a>0$. That means $F_W$ has a density of the form \eqref{eq:F_Wdensity}. In particular, this implies that the density of $W$ evaluated at $0$ is
\begin{equation}\label{eq:C_8Xdensityat0}
	f_{W}(0)=\lim_{\epsilon\downarrow 0}\frac{\lim_{l\rightarrow\infty}\int_0^{\epsilon}f_{d_{m_l}W_{m_l}}(x)dx}{\epsilon}.
\end{equation}
Combining \eqref{eq:C_8Xdensityat0} with Lemma \ref{lem:d_nW_n}, we get that there exists a $c\in(0,\infty)$ such that the following limit exists:
\begin{equation}\label{eq:cdef}
	\lim_{l\rightarrow\infty}c_{m_l}=c:=f_W(0).
\end{equation}
Clearly, $f_W(0)>0$ implies that $m=0$ in \eqref{eq:F_Wdensity}. Note that by \eqref{eq:cumdef}
\begin{equation}\label{eq:alpha_jlowerbd}
	\alpha_{j,n}^2\left[-u_4(Y_n)/4!\right]^{1/2}=\left[\frac{1}{2}\sum_{k\geq 1}\frac{\alpha_{j,n}^4}{\alpha_{k,n}^4}\right]^{1/2}\geq \left(\frac{j}{2}\right)^{1/2} \text{ for each }j\geq 1.
\end{equation}
So \eqref{eq:d_nW_ndensity}, the Taylor series for $\ln(1+y)$ when $|y|<1$, and Fubini's theorem give
\begin{align}
	f_{d_nW_{n}}(x)&=c_n\exp\left\{\sum_{j\geq 1}\left[\ln\left(1+\frac{x^2}{\alpha_{j,n}^2\left[-u_4(Y_n)/4!\right]^{1/2}}\right)-\frac{x^2}{\alpha_{j,n}^2\left[-u_4(Y_n)/4!\right]^{1/2}}\right]\right\}\nonumber\\
	&=c_n\exp\left\{\sum_{j\geq1}\sum_{k=2}^{\infty}\frac{(-1)^{k-1}x^{2k}}{k\alpha_{j,n}^{2k}\left[-u_4(Y_n)/4!\right]^{k/2}}\right\}\nonumber\\
	&=c_n\exp\left\{\sum_{k=2}^{\infty}\frac{(-1)^{k-1}x^{2k}\sum_{j\geq 1}\alpha_{j,n}^{-2k}}{k\left[-u_4(Y_n)/4!\right]^{k/2}}\right\}, ~\forall x\in\mathbb{R} \text{ and }|x|<\left(\frac{1}{2}\right)^{1/4}.\label{eq:d_nW_ndensitynear0}
\end{align}
Since
\begin{equation}
	\left(\sum_{j\geq 1}\frac{1}{\alpha_{j,n}^{2k}}\right)^{1/(2k)}\leq\left(\sum_{j\geq 1}\frac{1}{\alpha_{j,n}^{4}}\right)^{1/4}=\left(\frac{-2u_4(Y_n)}{4!}\right)^{1/4} \text{ for each }k\geq 2,
\end{equation}
we have
\begin{equation}
	0\leq\frac{\sum_{j\geq 1}\alpha_{j,n}^{-2k}}{\left[-u_4(Y_n)/4!\right]^{k/2}}\leq 2^{k/2} \text{ for each }k\geq 2.
\end{equation}
Therefore, by a standard diagonalization argument, we can extract a subsequence of $\{m_l: l\in\mathbb{N}\}$, $\{p_q: q\in\mathbb{N}\}$, such that the following limits exist
\begin{equation}\label{eq:b_kdef}
	\lim_{q\rightarrow\infty}\frac{\sum_{j\geq 1}\alpha_{j,p_q}^{-2k}}{\left[-u_4(Y_{p_q})/4!\right]^{k/2}}:=b_k\in[0,2^{k/2}] \text{ for all }k\geq 2.
\end{equation}
Combining \eqref{eq:cdef}, \eqref{eq:d_nW_ndensitynear0} and \eqref{eq:b_kdef}, we get
\begin{equation}\label{eq:f_p_qnear0}
	\lim_{q\rightarrow\infty}f_{d_{p_q}W_{p_q}}(x)=c\exp\left\{\sum_{k=2}^{\infty}\frac{(-1)^{k-1}b_kx^{2k}}{k}\right\},~\forall x\in\mathbb{R} \text{ and }|x|<\left(\frac{1}{2}\right)^{1/4}.
\end{equation}
So Lemma \ref{lem:d_nW_n}, \eqref{eq:f_p_qnear0}, the dominated convergence theorem and \eqref{eq:W_m_llimit} imply that for any $-(1/2)^{1/4}<y_1<y_2<(1/2)^{1/4}$,
\begin{equation}
	\lim_{q\rightarrow\infty}\int_{y_1}^{y_2} f_{d_{p_q}W_{p_q}}(x)dx=c\int_{y_1}^{y_2}\exp\left\{\sum_{k=2}^{\infty}\frac{(-1)^{k-1}b_kx^{2k}}{k}\right\}dx=\int_{y_1}^{y_2}f_W(x)dx.
\end{equation}
Hence (noting that $f_W(x)$ is continuous, and the RHS of \eqref{eq:f_p_qnear0} is also continuous in $x$)
\begin{equation}\label{eq:f_Wsmallx}
	f_W(x)=c\exp\left\{\sum_{k=2}^{\infty}\frac{(-1)^{k-1}b_kx^{2k}}{k}\right\},~\forall x\in\mathbb{R} \text{ and }|x|<\left(\frac{1}{2}\right)^{1/4}.
\end{equation}
It follows from \eqref{eq:d_nW_ndensity}, \eqref{eq:c_nbd} and \eqref{eq:alpha_jlowerbd} that $\{f_{d_{n}W_{n}}:n\in\mathbb{N}\}$ is uniformly bounded as $n$ varies in each bounded subset of $\mathbb{C}$. Then Vitali's theorem and \eqref{eq:f_p_qnear0}, \eqref{eq:f_Wsmallx} imply that
\begin{equation}\label{eq:f_p_qconv}
	\lim_{q\rightarrow\infty}f_{d_{p_q}W_{p_q}}(x)=f_{W}(x) \text{ uniformly on compact subsets of }\mathbb{C}.
\end{equation}
From \eqref{eq:d_nW_ndensity}, we know all zeros of $f_{d_{p_q}W_{p_q}}$ are
\begin{equation}
	\{\pm i\alpha_{j,p_q}\left[-u_4(Y_{p_q})/4!\right]^{1/4}: j\geq 1\}\text{ with }0<\alpha_{1,p_q}\leq\alpha_{2,p_q}\leq\dots;
\end{equation}
from \eqref{eq:F_Wdensity} and the fact that $m=0$, we know all zeros of $f_W$ are
\begin{equation}
	\{\pm ia_j:j\geq 1\}\text{ with } 0<a_1\leq a_2\leq\dots.
\end{equation}
Consequently, \eqref{eq:f_p_qconv} and Hurwitz's theorem (see Lemma \ref{lem:Hur}) imply that
\begin{equation}\label{eq:alpha_jconv}
	\lim_{q\rightarrow\infty}\alpha_{j,p_q}\left[-u_4(Y_{p_q})/4!\right]^{1/4}=a_j \text{ for each }j\geq 1.
\end{equation}
So \eqref{eq:alpha_jlowerbd}, \eqref{eq:b_kdef}, \eqref{eq:alpha_jconv} and the dominated convergence theorem give that
\begin{equation}
	b_k=\sum_{j\geq 1}\frac{1}{a_j^{2k}} \text{ for each }k>2.
\end{equation}
Combining this with \eqref{eq:f_Wsmallx} gives that
\begin{equation}\label{eq:f_Wsmallx1}
	f_W(x)=c\exp\left\{-x^4+\sum_{k=3}^{\infty}\frac{(-1)^{k-1}x^{2k}\sum_{j\geq1}a_j^{-2k}}{k}\right\},~\forall x\in\mathbb{R} \text{ and }|x|<\left(\frac{1}{2}\right)^{1/4},
\end{equation}
where we have used $b_2=2$.  By \eqref{eq:alpha_jlowerbd} and \eqref{eq:alpha_jconv}, we have
\begin{equation}
	a_j^2\geq\left(\frac{j}{2}\right)^{1/2} \text{ for each }j\geq 1.
\end{equation}
So applying the argument leading to \eqref{eq:d_nW_ndensitynear0} to the product in \eqref{eq:F_Wdensity}, we obtain (recall $m=0$)
\begin{equation}\label{eq:f_Wsmallx2}
	f_W(x)=K\exp\left[-\kappa_1x^4-\kappa_2x^2\right]\exp\left\{\sum_{k=2}^{\infty}\frac{(-1)^{k-1}x^{2k}\sum_{j\geq1}a_j^{-2k}}{k}\right\},~|x|<\left(\frac{1}{2}\right)^{1/4}.
\end{equation}
Comparing \eqref{eq:f_Wsmallx1} and \eqref{eq:f_Wsmallx2}, we get
\begin{equation}\label{eq:kappaa_j}
	K=c, \kappa_2=0, \kappa_1+\frac{\sum_{j\geq 1}a_j^{-4}}{2}=1.
\end{equation}

Finally, we show that $d_{n_k}W_{n_k}$ converges in distribution to $W$ as $k\rightarrow\infty$. Suppose that there is another subsequence of $\{n_k:k\in\mathbb{N}\}$, $\{\tilde{m}_l:l\in\mathbb{N}\}$, and a random variable $\tilde{W}$ such that
\begin{equation}\label{eq:W_tildem_llimit}
	d_{\tilde{m}_l}W_{\tilde{m}_l}\Longrightarrow \tilde{W}:=C_9X \text{ as }l\rightarrow\infty \text{ for some } C_9\in(0,\infty),
\end{equation}
where we have used \eqref{eq:tildeW_n_klimit}.
Then by \eqref{eq:W_m_llimit} and \eqref{eq:W_tildem_llimit}, we have
\begin{equation}\label{eq:tildeWtoW}
	\tilde{W}\overset{d}{=}\rho W \text{ for some }\rho\in(0,\infty).
\end{equation}
The arguments leading to \eqref{eq:f_p_qconv}, \eqref{eq:alpha_jconv} and \eqref{eq:kappaa_j} also imply that there exists a subsequence of $\{\tilde{m}_l:l\in\mathbb{N}\}$, $\{\tilde{p}_q:q\in\mathbb{N}\}$, such that
\begin{equation}\label{eq:Wtildep_qlimit}
	\lim_{q\rightarrow\infty}f_{d_{\tilde{p}_q}W_{\tilde{p}_q}}(x)=f_{\tilde{W}}(x)=\tilde{K}\exp[-\tilde{\kappa}_1x^4]\prod_{j\geq 1}\left[\left(1+\frac{x^2}{\tilde{a}_j^2}\right)\exp\left(-\frac{x^2}{\tilde{a}_j^2}\right)\right] 
\end{equation}
uniformly on compact subsets of $\mathbb{C}$ where
\begin{align}
	&\lim_{q\rightarrow\infty}\alpha_{j,\tilde{p}_q}\left[-u_4(Y_{\tilde{p}_q})/4!\right]^{1/4}=\tilde{a}_j \text{ for each }j\geq 1,\\
	&\tilde{\kappa}_1+\frac{\sum_{j\geq 1}\tilde{a}_j^{-4}}{2}=1.\label{eq:tildekappaa_j}
\end{align}
The relation between $\tilde{W}$ and $W$ in \eqref{eq:tildeWtoW}, \eqref{eq:F_Wdensity} and \eqref{eq:kappaa_j} give
\begin{equation}\label{eq:tildeWdensity}
	f_{\tilde{W}}(x)=\frac{1}{\rho}f_W\left(\frac{x}{\rho}\right)=\frac{K}{\rho}\exp\left[-\kappa_1\frac{x^4}{\rho^4}\right]\prod_{j\geq 1}\left[\left(1+\frac{x^2}{\rho^2a_j^2}\right)\exp\left(-\frac{x^2}{\rho^2a_j^2}\right)\right],~\forall x\in\mathbb{C}.
\end{equation}	
From \eqref{eq:Wtildep_qlimit} and \eqref{eq:tildeWdensity}, we get
\begin{equation}
	\tilde{\kappa}_1=\kappa_1/\rho^4, \tilde{a}_j=\rho a_j \text{ for each }j\geq 1.
\end{equation}
Plugging this into \eqref{eq:tildekappaa_j} and using \eqref{eq:kappaa_j}, we get $\rho=1$. Therefore
\begin{align}
	&d_{n_k}W_{n_k}\Longrightarrow W \text{ as }k\rightarrow\infty,\\
	&\lim_{k\rightarrow\infty}\alpha_{j,n_k}\left[-u_4(Y_{n_k})/4!\right]^{1/4}=a_j \text{ for each }j\geq 1.
\end{align}
This completes the proof of Theorem \ref{thm1} by applying Lemma \ref{lem:3sequences}.
\end{proof}

\subsection{Proof of Corollary \ref{cor1}}\label{subsec:cor}
We conclude this section with a proof of Corollary \ref{cor1}.
\begin{proof}[Proof of Corollary \ref{cor1}]
Let $W$ be a random variable with density function
\begin{equation}
	f_W(x)=\exp(-x^4)/\int_{-\infty}^{\infty}\exp(-t^4)dt,~\forall x\in\mathbb{R}.
\end{equation}
Then we have
\begin{equation}
	\mathbb{E}(W^2)=\frac{\int_{-\infty}^{\infty}x^2\exp(-x^4)dx}{\int_{-\infty}^{\infty}\exp(-x^4)dx}=\frac{\Gamma(3/4)}{\Gamma(1/4)}.
\end{equation}

So to prove Corollary \ref{cor1}, by Theorem \ref{thm1}, it suffices to show that there exists $\beta_1(d)\in(0,\beta_c(d)]$ such that for any $\beta\in[0,\beta_1(d))$, we have that
	\begin{equation}\label{eq:alphatoinfty}
		\lim_{n\rightarrow\infty}\alpha_{1,n}\left[-u_4(Y_n)/4!\right]^{1/4}=\infty.
	\end{equation}
By \cite{Rue71} or Theorem A of \cite{PR20}, there exists $\beta_1(d)\in(0,\beta_c(d)]$ such that for any $\beta\in[0,\beta_1(d))$, there is $C_{10}=C_{10}(d,\beta)\in(0,\infty)$ satisfying
\begin{equation}\label{eq:alpha_1lowerbd}
	\alpha_{1,n}\geq C_{10} \text{ uniformly in }n.
\end{equation}
Note that 
\begin{equation}
	\langle \exp(zY_n)\rangle_{\Lambda_n,\beta}=0 \text{ is equivalent to }\sum_{\sigma\in\{-1,+1\}^{\Lambda_n}}\exp\left[\beta\sum_{\{u,v\}}\sigma_u\sigma_v+z\sum_{u\in\Lambda_n}\sigma_u\right]=0.
\end{equation}
We also note that
\begin{equation}
	\langle\exp(zY_n)\rangle_{\Lambda_n,\beta}=(-1)^{k|\Lambda_n|}\left\langle\exp\left((z+ik\pi)Y_n\right)\right\rangle_{\Lambda_n,\beta} \text{ for any }k\in\mathbb{Z}
\end{equation}
since $Y_n$ and $|\Lambda_n|$ always have the same parity. So the arguments from \eqref{eq:partf} to \eqref{eq:partffactor} in the Appendix imply that exactly $|\Lambda_n|$ roots of $\langle \exp(zY_n)\rangle_{\Lambda_n,\beta}$ lie in the interval $i[0,\pi]$:
\begin{equation}
	\alpha_{1,n}=\frac{\theta_{1,n}}{2}, \alpha_{2,n}=\frac{\theta_{2,n}}{2}, \dots,\alpha_{|\Lambda_n|,n}=\frac{\theta_{|\Lambda_n|,n}}{2} \text{ where }0<\theta_{1,n}\leq\theta_{2,n}\leq\dots\leq\theta_{|\Lambda_n|,n}<2\pi.
\end{equation}
Therefore, all roots of $\langle \exp(zY_n)\rangle_{\Lambda_n,\beta}$ are
\begin{equation}
	\left\{(\pm i)(\theta_{l,n}/2+k\pi): l\in\{1,2,\dots,|\Lambda_n|\}, k\in\{0,1,2,\dots\}\right\}.
\end{equation}
This implies that for any $\beta\geq 0$, using \eqref{eq:cumdef}, we have
\begin{equation}\label{eq:u_4lowerbd}
	-u_4(Y_n)/4!=\frac{1}{2}\sum_{j\geq 1}\alpha_{j,n}^{-4}=\frac{1}{2}\sum_{l=1}^{|\Lambda_n|}\sum_{k=0}^{\infty}(\frac{\theta_{l,n}}{2}+k\pi)^{-4}\geq\frac{|\Lambda_n|}{2}\sum_{k=0}^{\infty}(\pi+k\pi)^{-4}\geq C_{11}|\Lambda_n|,
\end{equation}
where $C_{11}\in(0,\infty)$. Clearly, \eqref{eq:alphatoinfty} follows from \eqref{eq:alpha_1lowerbd} and \eqref{eq:u_4lowerbd}.
\end{proof}

\appendix
\section{Limit distribution of Lee-Yang zeros when $\beta<\beta_c(d)$}

\renewcommand*{\thetheorem}{\Alph{theorem}}
\setcounter{theorem}{0}
In this appendix, we prove that when $\beta<\beta_c(d)$, the limiting distribution of Lee-Yang zeros has no mass in an arc containing $\exp(i0)$ of the unit circle. This is stated as a conjecture in Section 1.3 of \cite{CHJR19}. As can be seen from below, the proof follows essentially from Theorem 1.2 of \cite{Gre60} and Theorem 1.5 of \cite{Ott20}. We present a slightly different and relatively self-contained proof here which might be better suited to the context.  We came up with this proof before we knew of the existence of \cite{Gre60}.

Let $\Lambda\subseteq\mathbb{Z}^d$ be finite. The Ising model on $\Lambda$ at inverse temperature $\beta\geq0$ with free boundary conditions and external field $h\in {\mathbb R}$ is defined by the probability measure $\mathbb{P}_{\Lambda,\beta,h}$ on $\{-1,+1\}^{\Lambda}$ such that for each $\sigma\in\{-1,+1\}^{\Lambda}$
\begin{equation}\label{eq:Isingdef}
	\mathbb{P}_{\Lambda,\beta,h}(\sigma):=\frac{\exp\left[\beta\sum_{\{u,v\}}\sigma_u\sigma_v+h\sum_{u\in\Lambda}\sigma_u\right]}{Z_{\Lambda,\beta,h}},
\end{equation}
where the first sum is over all  nearest-neighbor edges in $\Lambda$, and
$Z_{\Lambda,\beta,h}$ is the partition function that makes \eqref{eq:Isingdef}
a probability measure.  In this appendix, we will consider complex $h\in\mathbb{C}$. A famous result due to Lee and Yang \cite{LY52} is that $Z_{\Lambda,\beta,h}\neq 0$ if $h\notin i\mathbb{R}$ where $i\mathbb{R}$ denotes the pure imaginary axis. So the fraction in \eqref{eq:Isingdef} is well-defined if $h\notin i\mathbb{R}$ but it could take a complex value, and thus $\mathbb{P}_{\Lambda,\beta,h}$ is a complex measure. Let $\langle \cdot\rangle_{\Lambda,\beta,h}$ denote the expectation with respect to $\mathbb{P}_{\Lambda,\beta,h}$. For example, the magnetization at $u\in\Lambda$ is defined by
\begin{equation}\label{eq:expectationdef}
	\langle\sigma_u\rangle_{\Lambda,\beta,h}:=\frac{\sum_{\sigma\in\{-1,+1\}^{\Lambda}}\sigma_u\exp\left[\beta\sum_{\{u,v\}}\sigma_u\sigma_v+h\sum_{u\in\Lambda}\sigma_u\right]}{Z_{\Lambda,\beta,h}}.
\end{equation}

Let $\Lambda_n:=[-n,n]^d\cap\mathbb{Z}^d$ and $E_n$ be the set of all nearest-neighbor edges $\{u,v\}$ with $u,v\in\Lambda_n$. Note that
\begin{align}
	Z_{\Lambda_n,\beta,h}&=\sum_{\sigma\in\{-1,+1\}^{\Lambda_n}}\exp\left[\beta\sum_{\{u,v\}\in E_n}\sigma_u\sigma_v+h\sum_{u\in\Lambda_n}\sigma_u\right]\label{eq:partf}\\
	&=\exp\left[\beta|E_n|+h|\Lambda_n|\right]\sum_{\sigma\in\{-1,+1\}^{\Lambda_n}}\exp\left[\beta\sum_{\{u,v\}\in E_n}(\sigma_u\sigma_v-1)+h\sum_{u\in\Lambda_n}(\sigma_u-1)\right].\label{eq:partition}
\end{align}
Let $z=e^{-2h}$ throughout the appendix and write $Z_{\Lambda_n,\beta}(z):=Z_{\Lambda_n,\beta,h}$. Then the outmost sum in \eqref{eq:partition} is a polynomial of $z$ with degree $|\Lambda_n|$. So by the fundamental theorem of algebra, $Z_{\Lambda_n,\beta}(z)$ has exactly $|\Lambda_n|$ complex roots. The Lee-Yang circle theorem~\cite{LY52} says that these $|\Lambda_n|$ roots are on the unit circle $\partial \mathbb{D}$ where $\mathbb{D}:=\{z\in\mathbb{C}:|z|<1\}$ is the unit disk. So we may assume that these roots are
\begin{equation}\label{eq:roots}
	\exp(i\theta_{1,n}),\exp(i\theta_{2,n}),\dots,\exp(i\theta_{|\Lambda_n|,n}) \text{ where } 0<\theta_{1,n}\leq\theta_{2,n}\leq\dots\leq\theta_{|\Lambda_n|,n}<2\pi.
\end{equation}
Note that we have used the fact that $Z_{\Lambda_n,\beta}(1)>0$. By the spin-flip symmetry, $Z_{\Lambda_n,\beta,-h}=Z_{\Lambda_n,\beta,h}$ for any $h\in\mathbb{C}$. As a result, those $|\Lambda_n|$ roots in \eqref{eq:roots} are symmetric with respect to the real-axis. Combining \eqref{eq:partition} and \eqref{eq:roots}, we have
\begin{equation}\label{eq:partffactor}
	Z_{\Lambda_n,\beta}(z)=\exp[\beta|E_n|-(\ln z)|\Lambda_n|/2]\prod_{j=1}^{|\Lambda_n|}\left[z-\exp(i\theta_{j,n})\right],~ z\in\mathbb{C}\setminus(-\infty,0].
\end{equation}
Here we take out $(-\infty,0]$ from $\mathbb{C}$ so that $h=-(\ln z)/2$ is analytic in this slit domain. Therefore,
\begin{equation}\label{eq:deri}
	\frac{\partial \ln Z_{\Lambda_n,\beta}(z)}{\partial z}=-\frac{|\Lambda_n|}{2z}+\sum_{j=1}^{|\Lambda_n|}\frac{1}{z-\exp(i\theta_{j,n})},~\forall z\in\mathbb{D}\setminus(-1,0].
\end{equation}
It is easy to see that
\begin{equation}\label{eq:deri1}
	\left\langle\sum_{u\in\Lambda_n}\sigma_u\right\rangle_{\Lambda_n,\beta,h}=\frac{\partial \ln Z_{\Lambda_n,\beta,h}}{\partial h}=\frac{\partial \ln Z_{\Lambda_n,\beta}(z)}{\partial z}\frac{\partial z}{\partial h}=-2z\frac{\partial \ln Z_{\Lambda_n,\beta}(z)}{\partial z},~\forall z\in\mathbb{D}\setminus(-1,0].
\end{equation}

We define the \textbf{average magnetization density} in $\Lambda_n$ by
\begin{equation}
	m_{\Lambda_n}(z):=\frac{\left\langle\sum_{u\in\Lambda_n}\sigma_u\right\rangle_{\Lambda_n,\beta,h}}{|\Lambda_n|},~\forall z\in\mathbb{C}\setminus\partial\mathbb{D},
\end{equation}
where we have dropped the $\beta$ dependence in $m_{\Lambda_n}(z)$.  By \eqref{eq:deri} and \eqref{eq:deri1}, we have
\begin{equation}\label{eq:amd}
	m_{\Lambda_n}(z)=1-\frac{2z}{|\Lambda_n|}\sum_{j=1}^{|\Lambda_n|}\frac{1}{z-\exp(i\theta_{j,n})}=\frac{1}{|\Lambda_n|}\sum_{j=1}^{|\Lambda_n|}\frac{\exp(i\theta_{j,n})+z}{\exp(i\theta_{j,n})-z},~\forall z\in\mathbb{D}\setminus(-1,0].
\end{equation}
By using the definition of $\langle \cdot\rangle_{\Lambda,\beta,h}$ (see \eqref{eq:expectationdef}), we know that $m_{\Lambda_n}(z)$ is a rational function of $z$ with poles on $\partial \mathbb{D}$, and thus \eqref{eq:amd} holds for each $z\in \mathbb{D}$. We define the empirical distribution
\begin{equation}
	\mu_n:=\frac{1}{|\Lambda_n|}\sum_{j=1}^{|\Lambda_n|}\delta_{\exp(i\theta_{j,n})},
\end{equation}
where $\delta_{\exp(i\theta_{j,n})}$ is the unit Dirac point measure at $\exp(i\theta_{j,n})$. Then 
\begin{equation}\label{eq:mint}
	m_{\Lambda_{n}}(z)=\int_{\partial\mathbb{D}}\frac{e^{i\theta}+z}{e^{i\theta}-z}d\mu_{n}(e^{i\theta}),~\forall z\in\mathbb{D}.
\end{equation}

Since $\mu_n$'s live on the unit circle, $\{\mu_n:n\in\mathbb{N}\}$ is tight. So there is a subsequence of $\mathbb{N}$, $\{n_k:k\in\mathbb{N}\}$, such that $\mu_{n_k}\Longrightarrow \mu$ as $k\rightarrow\infty$ where $\mu$ is some probability measure on $\partial \mathbb{D}$. We will see later that $\mu$ is actually unique. Therefore,
\begin{equation}\label{eq:sublimit}
	m_{\Lambda_{n_k}}(z)=\int_{\partial\mathbb{D}}\frac{e^{i\theta}+z}{e^{i\theta}-z}d\mu_{n_k}(e^{i\theta})\rightarrow\int_{\partial\mathbb{D}}\frac{e^{i\theta}+z}{e^{i\theta}-z}d\mu(e^{i\theta}) \text{ as }k\rightarrow\infty,~\forall z\in \mathbb{D}.
\end{equation}

Note that 
\begin{equation}
	\Re m_{\Lambda_n}(z)=\int_{\partial\mathbb{D}}\Re\frac{e^{i\theta}+z}{e^{i\theta}-z}d\mu_{n}(e^{i\theta})=\int_{\partial\mathbb{D}}\frac{1-|z|^2}{|e^{i\theta}-z|^2}d\mu_n(e^{i\theta})>0,~\forall z\in\mathbb{D},
\end{equation}
and $m_{\Lambda_n}(0)=1$. So $m_{\Lambda_n}$ is a \textbf{Herglotz function}. See Section 8.4 of \cite{RR94} for more details. From \eqref{eq:mint}, we know
\begin{equation}
	|m_{\Lambda_n}(z)|\leq \frac{1+r}{1-r}, \text{ for any } z \text{ satisfying } |z|\leq r<1.
\end{equation}
So $\{m_{\Lambda_n}:n\in\mathbb{N}\}$ is locally uniformly bounded. It is well-known that
\begin{equation}
	\lim_{n\rightarrow\infty}m_{\Lambda_n}(z)=\langle \sigma_0\rangle_{\mathbb{Z}^d,\beta,h},~\forall z\in(0,1),
\end{equation}
where $\langle \cdot\rangle_{\mathbb{Z}^d,\beta,h}$ is the expectation with respect to the unique infinite volume measure when $\beta\geq 0$ and $h>0$ (see, e.g., Proposition 3.29 and Theorem 3.46 of \cite{FV17}). So by Vitali's theorem (see, e.g., Theorem B.25 of \cite{FV17}),
\begin{equation}\label{eq:fulllimit}
	m(z):=\lim_{n\rightarrow\infty}m_{\Lambda_n}(z)
\end{equation}
exists locally uniformly on $\mathbb{D}$ and $m$ is a Herglotz function. Comparing \eqref{eq:sublimit} and \eqref{eq:fulllimit}, we obtain
\begin{equation}
	m(z)=\int_{\partial\mathbb{D}}\frac{e^{i\theta}+z}{e^{i\theta}-z}d\mu(e^{i\theta}),~\forall z\in\mathbb{D}.
\end{equation}
We define
\begin{equation}\label{eq:odd}
	m(z):=-m(1/z),~\forall z\in\mathbb{C}\setminus\bar{\mathbb{D}}.
\end{equation}
Since $\langle\sigma_x\rangle_{\Lambda_n,\beta,h}=-\langle\sigma_x\rangle_{\Lambda_n,\beta,-h}$ for any $x\in\Lambda_n$ and $h\in\mathbb{C}\setminus i\mathbb{R}$, we get
\begin{equation}\label{eq:magoutD}
	m(z)=\lim_{n\rightarrow\infty}m_{\Lambda_n}(z),~\forall z\in \mathbb{C}\setminus\bar{\mathbb{D}}.
\end{equation}
The following Stieltjes inversion formula on page 12 of \cite {RR94} will be very import to our analysis of $\mu$.
\begin{theorem}[Stieltjes Inversion Formula]\label{thm:Sti}
	Let $\gamma:=\{e^{it}: a<t<b\}$ be an open arc on $\partial \mathbb{D}$ with endpoints $e^{ia}$ and $e^{ib}$, $0<b-a<2\pi$. Then
	\begin{equation}
		\mu(\gamma)+\frac{1}{2}\mu(\{e^{ia}\})+\frac{1}{2}\mu(\{e^{ib}\})=\lim_{r\uparrow1}\frac{1}{2\pi}\int_a^b \Re m(re^{i\theta}) d\theta.
	\end{equation}
\end{theorem}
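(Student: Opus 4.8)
The statement is the classical Stieltjes inversion for Herglotz functions on the disk, and the plan is to derive it directly from the Poisson‑kernel representation of $\Re m$ together with the approximate‑identity property of that kernel. Writing $z=re^{i\phi}$ with $0<r<1$, a short computation gives
\begin{equation*}
	\Re\frac{e^{i\theta}+z}{e^{i\theta}-z}=\frac{1-r^{2}}{1-2r\cos(\theta-\phi)+r^{2}}=:P_{r}(\theta-\phi),
\end{equation*}
the Poisson kernel, so that $\Re m(re^{i\phi})=\int_{\partial\mathbb{D}}P_{r}(\theta-\phi)\,d\mu(e^{i\theta})$. Since $P_{r}\ge 0$ and $\mu$ is a finite measure, Fubini's theorem lets me interchange the $\phi$-integral with the $\mu$-integral:
\begin{equation*}
	\frac{1}{2\pi}\int_{a}^{b}\Re m(re^{i\phi})\,d\phi=\int_{\partial\mathbb{D}}\Phi_{r}(\theta)\,d\mu(e^{i\theta}),\qquad\text{where}\qquad\Phi_{r}(\theta):=\frac{1}{2\pi}\int_{a}^{b}P_{r}(\theta-\phi)\,d\phi .
\end{equation*}
Everything then reduces to computing $\lim_{r\uparrow1}\Phi_{r}(\theta)$ and passing this limit under the $\mu$-integral.

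For the pointwise limit I would substitute $\psi=\theta-\phi$, so that $\Phi_{r}(\theta)=\frac{1}{2\pi}\int_{\theta-b}^{\theta-a}P_{r}(\psi)\,d\psi$ is the average of $P_{r}$ over an interval of length $b-a\in(0,2\pi)$, and invoke the standard facts about the Poisson kernel: $P_{r}\ge 0$; $\frac{1}{2\pi}\int_{-\pi}^{\pi}P_{r}=1$ (whence $0\le\Phi_{r}\le 1$); $P_{r}$ is even, so $\frac{1}{2\pi}\int_{0}^{\pi}P_{r}=\frac{1}{2\pi}\int_{-\pi}^{0}P_{r}=\tfrac12$ for every $r$; and $\sup_{\delta\le|\psi|\le\pi}P_{r}(\psi)\to 0$ as $r\uparrow1$ for each $\delta\in(0,\pi)$. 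Reducing the interval $[\theta-b,\theta-a]$ modulo $2\pi$ and distinguishing the position of $0$ (mod $2\pi$) relative to it, one finds three cases: if $e^{i\theta}$ lies in the open arc $\gamma$ then $0$ lies in the interior of the interval, a symmetric neighbourhood of $0$ is captured, and $\Phi_{r}(\theta)\to1$; if $e^{i\theta}$ lies outside the closed arc then $0$ is at positive distance (mod $2\pi$) from the interval, $P_{r}(\theta-\phi)\to0$ uniformly over $\phi\in[a,b]$, and $\Phi_{r}(\theta)\to0$; and if $e^{i\theta}\in\{e^{ia},e^{ib}\}$ then $0$ is an endpoint of the interval and $\Phi_{r}(\theta)\to\tfrac12$, the exact value $\tfrac12$ being forced by the evenness of $P_{r}$.

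With the pointwise limit $\Phi_{r}\to\mathbf{1}_{\gamma}+\tfrac12\mathbf{1}_{\{e^{ia}\}}+\tfrac12\mathbf{1}_{\{e^{ib}\}}$ established and the uniform bound $0\le\Phi_{r}\le1$ in hand, the dominated convergence theorem (valid because $\mu$ is finite) yields
\begin{equation*}
	\lim_{r\uparrow1}\frac{1}{2\pi}\int_{a}^{b}\Re m(re^{i\phi})\,d\phi=\mu(\gamma)+\tfrac12\mu(\{e^{ia}\})+\tfrac12\mu(\{e^{ib}\}),
\end{equation*}
which is the claimed identity; as a byproduct this shows $\mu$ is uniquely determined by $m|_{\mathbb{D}}$, giving the uniqueness of $\mu$ announced just before \eqref{eq:sublimit}. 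The only genuinely delicate step is the ``mod $2\pi$'' bookkeeping that makes the three cases above exhaustive and mutually exclusive for every $\theta$ and, in particular, pins the endpoint value down as exactly $\tfrac12$; once $\lim_{r\uparrow1}\Phi_{r}$ has been identified, the interchanges of integrals and limits are routine.
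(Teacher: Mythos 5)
The paper does not prove this theorem; it simply quotes it from page~12 of \cite{RR94}, so there is no ``paper's approach'' to compare against. Your proof is a correct, self-contained rendition of the standard argument: you identify $\Re\frac{e^{i\theta}+z}{e^{i\theta}-z}$ as the Poisson kernel $P_r(\theta-\phi)$, apply Tonelli/Fubini (legitimate by nonnegativity and finiteness of $\mu$) to reduce the assertion to the limit of the averaged kernel $\Phi_r(\theta)=\frac{1}{2\pi}\int_a^b P_r(\theta-\phi)\,d\phi$, and then use the approximate-identity properties of $P_r$ (nonnegativity, unit mass, evenness, and uniform decay away from $0$) to obtain the pointwise limit $\mathbf{1}_{\gamma}+\frac12\mathbf{1}_{\{e^{ia}\}}+\frac12\mathbf{1}_{\{e^{ib}\}}$, with the uniform bound $0\le\Phi_r\le1$ justifying dominated convergence. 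The three-case bookkeeping modulo $2\pi$ is exactly the delicate point, and your use of evenness to pin the endpoint value at $\frac12$ (since $\frac{1}{2\pi}\int_{-\delta}^{0}P_r\to\frac12$ for every $\delta>0$) is the right way to handle it. Your closing remark that this also yields the uniqueness of $\mu$ from $m|_{\mathbb D}$ matches the claim the paper makes immediately after stating the theorem.
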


In particular, Theorem \ref{thm:Sti} implies the $\mu$ that we obtained from the subsequential limit is unique, that is, we have $\mu_n\Longrightarrow \mu$ as $n\rightarrow\infty$. We call $\mu$ the \textbf{limiting distribution of Lee-Yang zeros}. Note that $\mu$ is actually a function of $\beta$. Let $\beta_c(d)$ be the critical inverse temperature of the Ising model on $\mathbb{Z}^d$. We are ready to prove the main result about $\mu$.
\begin{theorem}
	For any $d\geq 1$ and any $\beta\in[0,\beta_c(d))$, there is $\epsilon>0$ (which only depends on $\beta$ and $d$) such that
	\begin{equation}
		\mu\left(\{e^{it}:-\epsilon<t<\epsilon\}\right)=0.
	\end{equation}
\end{theorem}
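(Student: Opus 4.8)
The plan is to reduce the statement to an analyticity assertion and then finish with the Stieltjes inversion formula. It suffices to show that for $\beta<\beta_c(d)$ the infinite-volume pressure, as a function of the magnetic field $h$, extends holomorphically from $(0,\infty)$ to a disc $D(0,r_0)$ with $r_0=r_0(\beta,d)>0$. Granting this, write $\psi(z)$ for the induced function of $z=e^{-2h}$; it is holomorphic on the neighbourhood $U:=\{e^{-2h}:h\in D(0,r_0)\}$ of $z=1$, which, since $D(0,r_0)$ is invariant under $h\mapsto\bar h$ and $h\mapsto-h$, is invariant under $z\mapsto\bar z$ and under $z\mapsto1/z$; moreover $\psi(z)=\psi(1/z)$ on $U$ because $\psi$ is even in $h$. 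By \eqref{eq:deri1}, $m:=-2z\,\partial_z\psi$ is holomorphic on $U$; it coincides on $\mathbb{D}\cap U$ with the magnetization density from the main text (both are holomorphic there and agree on $(1-\delta,1)$, where each equals $\langle\sigma_0\rangle_{\mathbb{Z}^d,\beta,h}$), so it is the holomorphic continuation of $m|_{\mathbb{D}}$ across the arc $U\cap\partial\mathbb{D}$, and $\psi(z)=\psi(1/z)$ gives $m(z)=-m(1/z)$ on $U$. Since $m$ is real on $(1-\delta,1)$, Schwarz reflection yields $m(\bar z)=\overline{m(z)}$ on $U$, so for $z=e^{i\theta}\in U$, where $1/z=\bar z$, we get $m(e^{i\theta})=-\overline{m(e^{i\theta})}$, i.e. $\Re m(e^{i\theta})=0$. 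Feeding this into Theorem \ref{thm:Sti} over a closed sub-arc $\{e^{it}:|t|\le\epsilon\}$ of $U\cap\partial\mathbb{D}$ (the radial limit being uniform by holomorphy) and using $\mu\ge0$ gives $\mu(\{e^{it}:|t|\le\epsilon\})=0$, which is the claim.

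The substantive step is the holomorphic extension of the pressure past $h=0$. The input is exponential decay of the truncated two-point function, uniformly for $h\ge0$: by the GHS inequality \cite{GHS70} the truncated function $\langle\sigma_0;\sigma_x\rangle_{\mathbb{Z}^d,\beta,h}$ is nonincreasing in $h\ge0$, hence bounded by $\langle\sigma_0\sigma_x\rangle_{\mathbb{Z}^d,\beta,0}\le e^{-c|x|}$ for $\beta<\beta_c(d)$ (\cite{ABF87}), so $\chi(\beta,h):=\sum_x\langle\sigma_0;\sigma_x\rangle_{\mathbb{Z}^d,\beta,h}\le\chi(\beta):=\chi(\beta,0)<\infty$ for all $h\ge0$. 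The pressure $\psi$ is $C^\infty$ on $(0,\infty)$ with $\psi^{(2m)}(h)=c_{2m}(h):=\lim_n u_{2m}(Y_n;h)/|\Lambda_n|$ (odd derivatives vanishing by symmetry); these extend continuously to $h=0^+$; and by the tree-graph / Ursell-function bounds for the Ising model (for $u_4$ this is Aizenman's inequality, Proposition 5.3 of \cite{Aiz82}, with analogous higher-order bounds), using the $x$-sums above, $|c_{2m}(h)|\le(2m)^{2m-2}\chi(\beta)^{2m-1}$ for all $h\ge0$ and $m\ge1$. Taylor's theorem with Lagrange remainder on $[0,x]$ with $0<x<r_0:=1/(2e\,\chi(\beta))$ then gives (the remainder $\tfrac{(N+1)^{N-1}\chi(\beta)^N}{(N+1)!}\,x^{N+1}\to0$ by Stirling) that $\psi(x)=\psi(0^+)+\sum_{m\ge1}\tfrac{c_{2m}(0^+)}{(2m)!}x^{2m}$; the right-hand side is a power series with radius of convergence at least $r_0$, hence holomorphic on $D(0,r_0)$, so $\psi$ extends holomorphically there. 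For $\beta$ small this extension is of course already contained in the cluster-expansion analyticity of \cite{Rue71}.

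The main obstacle is precisely this uniform-in-$h$ factorial bound on the higher cumulant densities, i.e. upgrading the qualitative ``correlations decay exponentially'' to the quantitative statement that the Ursell functions are absolutely summable with growth $\lesssim(2m)!\,C^{2m}$, which is exactly what keeps $r_0$ positive; the decisive combinatorial point is that there are only $(2m)^{2m-2}$ spanning trees on $2m$ vertices (Cayley's formula), whereas anything of order $((2m)!)^2$ would force $r_0=0$. This rests on the tree-graph inequality for Ising Ursell functions (available, e.g., through the random-current representation) together with the continuity and existence of $c_{2m}(h)$ on $[0,\infty)$; alternatively, the entire second paragraph can be replaced by a citation of Theorem 1.5 of \cite{Ott20} or Theorem 1.2 of \cite{Gre60}, which yield precisely the holomorphic continuation of the pressure across $h=0$ from exponential mixing. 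The remaining items --- the identity-theorem gluings, the $C^\infty$-ness of $\psi$ up to the boundary, and the handling of the two arc-endpoints in Theorem \ref{thm:Sti} --- are routine.
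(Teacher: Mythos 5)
Your argument is correct and takes essentially the same route as the paper: establish analyticity of $m$ in a complex neighbourhood of $z=1$, use the symmetry $m(z)=-m(1/z)$, and conclude via the Stieltjes inversion formula. Your Schwarz-reflection packaging ($m$ real on $(1-\delta,1)$ plus $1/z=\bar z$ on the circle, giving $\Re m(e^{i\theta})=0$) is an equivalent and slightly cleaner way to express the paper's cancellation of $\Re m(re^{i\theta})+\Re m(re^{-i\theta})$ in the limit $r\uparrow 1$. The one place you go beyond the paper is the second paragraph: the paper simply cites Theorem~1.5 of \cite{Ott20} (and Theorem~1.2 of \cite{Gre60}) for the analyticity, whereas you sketch a proof from first principles. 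That sketch is in the right spirit, but its load-bearing step---the uniform-in-$h$ tree-graph bound $|c_{2m}(h)|\le(2m)^{2m-2}\chi(\beta)^{2m-1}$ for \emph{all} $m$, together with continuity of $c_{2m}(h)$ down to $h=0^+$---is asserted by analogy with Aizenman's $m=2$ inequality and is not a standard off-the-shelf fact for the nearest-neighbour Ising model; extracting it cleanly (say from the random-current representation) with no extra factorial losses is precisely the hard technical content that \cite{Ott20} and \cite{Gre60} supply. Since you explicitly flag the citation as an alternative, the proposal stands as correct; just be aware that the second paragraph, as written, is a heuristic and should not be taken as a substitute for that citation.
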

\begin{proof}
	By Theorem \ref{thm:Sti},
	\begin{align}
		&\mu\left(\{e^{it}:-\epsilon<t<\epsilon\}\right)+\frac{1}{2}\mu\left(\{e^{i\epsilon}\}\right)+\frac{1}{2}\mu\left(\{e^{i\epsilon}\}\right)=\lim_{r\uparrow1}\frac{1}{2\pi}\int_{-\epsilon}^{\epsilon} \Re m(re^{i\theta}) d\theta\\
		&\qquad=\lim_{r\uparrow1}\frac{1}{2\pi}\left[\int_{-\epsilon}^{0} \Re m(re^{i\theta}) d\theta+\int_{0}^{\epsilon} \Re m(re^{i\theta}) d\theta\right]\\
		&\qquad=\lim_{r\uparrow1}\frac{1}{2\pi}\left[\int_{0}^{\epsilon} \Re m(re^{-i\theta}) d\theta+\int_{0}^{\epsilon} \Re m(re^{i\theta}) d\theta\right].
	\end{align}
	By Theorem 1.5 of \cite{Ott20}, we have that $m(z)$ is complex analytic in a neighborhood of $z=1$. So we may pick $\epsilon$ small such that $m$ is analytic in $D(1,2\epsilon):=\{z\in\mathbb{C}:|z-1|<2\epsilon\}$. Then both $\Re m(re^{-i\theta}) $ and $\Re m(re^{i\theta}) $ are bounded if $re^{-i\theta}$ and $re^{i\theta}$ are in $D(1,\epsilon)$. The dominated converge theorem implies that
	\begin{align}
		\mu\left(\{e^{it}:-\epsilon<t<\epsilon\}\right)+\frac{1}{2}\mu\left(\{e^{i\epsilon}\}\right)+\frac{1}{2}\mu\left(\{e^{i\epsilon}\}\right)\\
		=\frac{1}{2\pi}\int_{0}^{\epsilon}\left[ \lim_{r\uparrow 1}\Re m(re^{-i\theta}) +\lim_{r\uparrow 1} \Re m(re^{i\theta})\right] d\theta.\label{eq:mulimit}
	\end{align}
By \eqref{eq:odd} and \eqref{eq:magoutD}, and continuity of $m$ in $D(1,2\epsilon)$, we have
\begin{equation}
	\lim_{r\uparrow1}\Re m(re^{-i\theta})=-	\lim_{r\uparrow1}\Re m(r^{-1}e^{i\theta})=-\lim_{r\uparrow1}\Re m(re^{i\theta}).
\end{equation}
Plugging this into \eqref{eq:mulimit}, we get
\begin{equation}
	\mu\left(\{e^{it}:-\epsilon<t<\epsilon\}\right)+\frac{1}{2}\mu\left(\{e^{i\epsilon}\}\right)+\frac{1}{2}\mu\left(\{e^{i\epsilon}\}\right)=0,
\end{equation}
which concludes the proof of the theorem.
\end{proof}

\section*{Acknowledgements}
The research of the second author was partially supported by NSFC grant 11901394 and that of the third author by US-NSF grant DMS-1507019. The authors thank two anonymous reviewers for useful comments and suggestions.

\bibliographystyle{abbrv}
\bibliography{reference}

\end{document}